\newtheorem{thm}{Theorem}
\newtheorem{cor}[thm]{Corollary}
\newtheorem{lem}[thm]{Lemma}
\newtheorem{prop}{Proposition}
\newtheorem{conj}[thm]{Conjecture}
\newtheorem{claim}{Claim}
\begin{document}

\title{Odd induced subgraphs in graphs with treewidth at most two
\thanks{The work was supported by NNSF of China (No. 11671376) and  NSF of Anhui Province (No. 1708085MA18) and  the Fundamental Research Funds for the Central Universities.}
}
\author{Xinmin Hou$^a$, \quad Lei Yu$^b$, \quad Jiaao Li$^c$, \quad Boyuan Liu$^d$\\
\small $^{a,b,d}$ Key Laboratory of Wu Wen-Tsun Mathematics\\
\small School of Mathematical Sciences\\
\small University of Science and Technology of China\\
\small Hefei, Anhui 230026, China.\\
\small $^{c}$Department of Mathematics\\
\small West Virginia University\\
\small Morgantown, WV 26506, USA
}
\date{}
\maketitle

\begin{abstract}
A long-standing conjecture asserts that there exists a constant  $c>0$ such that every graph of order $n$ without isolated vertices contains an induced subgraph of order at least $cn$ with all degrees odd. Scott (1992) proved that every graph $G$ has an induced subgraph of order at least $|V(G)|/(2\chi(G))$ with all degrees odd, where $\chi(G)$ is the chromatic number of $G$,  this implies the conjecture for graphs with { bounded} chromatic number. But the factor $1/(2\chi(G))$ seems to be not best possible, for example, Radcliffe and Scott (1995) proved $c=\frac 23$ for trees, Berman, Wang and Wargo (1997) showed that $c=\frac 25$  for graphs with maximum degree $3$, so it is interesting to determine the exact value of $c$ for special family of graphs. In this paper, we further confirm the conjecture for graphs with treewidth at most 2 with $c=\frac{2}{5}$, and the bound is best possible.
\end{abstract}

\section{Introduction}

Gallai~\cite{Lovasz-CPE1979} proved that for every graph $G$, the vertex set $V(G)$ can be partitioned into two sets, each of which induces a subgraph with all degrees even.
This implies that every graph of order $n$ contains an induced subgraph of order at least $\lceil \frac{n}{2}\rceil$ with all degrees even, and this  is best possible by considering paths. This motivates us to consider the problem that how large we can find an induced subgraph with all degrees odd.
We call a graph with all degrees odd an {\it odd graph}. Let $f(G)$ denote the maximum order of an odd induced subgraph in a graph $G$. The following long-standing conjecture was cited by Caro in~\cite{Caro-DM1994} as ``part of the graph theory folklore'' and the origin is unclear.

\begin{conj}~\label{CONJ: OSC}
There exists a constant $c>0$ such that for every graph $G$ without isolated vertices, $f(G)\geq c|V(G)|$.
\end{conj}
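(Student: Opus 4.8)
The plan is to work over $\mathbb{F}_2$ and recast the problem in linear-algebraic terms. Writing $A$ for the adjacency matrix of $G$ over $\mathbb{F}_2$ and identifying a vertex subset $S$ with its characteristic vector $x=\mathbf{1}_S\in\mathbb{F}_2^{V(G)}$, the set $S$ induces an odd subgraph exactly when $(Ax)_v=1$ for every $v$ with $x_v=1$. The crucial difficulty, and the reason the odd case is so much harder than Gallai's even-subgraph theorem, is that this is \emph{not} a linear condition: it constrains the coordinates of $Ax$ only on the support of $x$, so the usual tools---kernels, ranks, and the vertex-partition argument behind Gallai's result---do not transfer directly. The first task is therefore to build a framework in which the support of $x$ and the parity constraints on that support can be handled simultaneously, rather than solving a single global linear system.

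Since Scott's bound $f(G)\ge |V(G)|/(2\chi(G))$ already settles every family of bounded chromatic number, I would split the argument by the local density of $G$ and concentrate the work on the high-chromatic (dense) regime, where Scott's factor degenerates. The engine I propose is a random-sampling-plus-correction scheme. First choose $S$ by including each vertex independently with a suitable probability $p$; for a fixed $v\in S$ the number of its neighbours inside $S$ is a sum of independent Bernoulli variables, hence odd with probability bounded away from $0$ (and, for high-degree $v$, close to $\tfrac12$), so a constant fraction of $S$ already carries the correct parity. The remaining ``bad'' vertices, those with even internal degree, must then be fixed by toggling the membership of a small correction set $T$: the parity change induced at a vertex $v$ is $(A\mathbf{1}_T)_v$, so finding a valid correction reduces to solving a prescribed system $A_{U}\mathbf{1}_T=\mathbf{1}_{B}$ over $\mathbb{F}_2$ (with $B$ the bad set and $U$ an appropriate index set) while keeping $|T|$ linear in $|S|$.

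The hard part, and precisely where the conjecture has resisted elementary attacks, is guaranteeing that this correction step succeeds \emph{uniformly} over all graphs: one must show both that the prescribed parity vector lies in the column space of the relevant submatrix of $A$---an expansion or rank statement---and, far more delicately, that a solution $T$ of size $O(|S|)$ can always be chosen without spoiling the parities already made odd. Controlling the \emph{support size} of solutions to an $\mathbb{F}_2$ system is exactly what rank bounds do not supply, so I expect to need a reduction that tames the linear algebra before invoking it, for instance by first passing to a subgraph of large minimum degree or bounded maximum degree (where Berman--Wang--Wargo-type arguments give a linear-sized odd subgraph) and then transferring that subgraph back to $G$. Amalgamating the dense and sparse regimes into a single absolute constant $c$, independent of both $\chi(G)$ and $n$, is the final and most demanding step of the program.
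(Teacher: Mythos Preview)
The statement you are addressing is labelled \emph{Conjecture} in the paper and is, as far as the paper is concerned, open: the paper does not prove it and offers no argument toward the general case. Its contribution is Theorem~\ref{THM: Main}, the special case $\mathrm{tw}(G)\le 2$, proved by a minimal-counterexample analysis together with the structural lemma of Lih--Wang--Zhu for $K_4$-minor-free graphs. So there is no ``paper's own proof'' to compare against; your proposal is an attack on a problem the paper leaves unsolved.

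On its own terms, your write-up is a research programme rather than a proof, and you have located the gap yourself. The random-sampling step is fine: for each $v\in S$, $\Pr[\deg_{G[S]}(v)\text{ is odd}]$ is bounded below, so in expectation a constant fraction of $S$ has the right parity. The missing idea is entirely in the correction phase. You need a set $T$ with $|T|=O(|S|)$ such that toggling $T$ fixes every bad vertex \emph{and} creates no new bad vertices among the previously good ones, i.e.\ $(A\mathbf{1}_T)_v$ must be prescribed on \emph{all} of $S\triangle T$, not just on the original bad set $B$. This is a moving target: the constraint set depends on $T$ itself. Even if one freezes the constraint set, rank over $\mathbb{F}_2$ gives existence of \emph{some} solution but says nothing about support size, and known sparse-solution results over $\mathbb{F}_2$ require structural hypotheses (expansion, bounded column weight) that an arbitrary graph need not satisfy.

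Your fallback---reduce to a subgraph of bounded maximum degree and invoke Berman--Wang--Wargo---also does not close the gap. If the bounded-degree subgraph $G'$ is obtained by deleting edges, an odd induced subgraph of $G'$ is generally not odd in $G$, since $G[V(H)]\neq G'[V(H)]$. If instead $G'$ is an induced subgraph of $G$, you must first exhibit an induced subgraph of linear size with bounded maximum degree, which is false in general (take $K_{1,n-1}$, or more robustly any graph where every linear-size vertex set contains a high-degree vertex). So neither branch of the programme currently yields a uniform constant $c$; what is written is a plausible heuristic, not a proof.
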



The ``without isolated vertices'' constraint is natural because an odd graph does not contain isolated vertices.
Many results related to Conjecture~\ref{CONJ: OSC} have been obtained in literatures. In particular, Caro~\cite{Caro-DM1994} proved that $f(G)\geq (1-o(1))\sqrt{|V(G)|/6}$, laterly, Scott~\cite{Scott-CPC92} improved the lower bound to $\frac{c|V(G)|}{\log{|V(G)|}}$ for some $c>0$, in the same paper, Scott also proved that every graph $G$ has an odd induced subgraph of order at least $|V(G)|/(2\chi(G))$, where $\chi(G)$ is the chromatic number of $G$, this implies the conjecture for graphs with { bounded} chromatic number. But the factor $1/(2\chi(G))$ seems to be not best possible, for example,  Radcliffe and Scott~\cite{RS-DM95} confirmed the conjecture for trees (graphs with treewidth one) with $c=\frac{2}{3}$ and Berman, Wang and Wargo~\cite{BWW-AJC97} proved the conjecture for graphs with maximum degree $3$ with $c=\frac{2}{5}$. In this paper, we further confirm Conjecture~\ref{CONJ: OSC} for graphs with treewidth at most 2 with $c=\frac 25$, and the value of $c$ is best possible.


 A {\it tree decomposition} of a graph $G$ is a tree $T$, where

 (1) Each vertex $i$ of $T$ is labeled by a subset $B_i$ of vertices of $G$.

 (2) Each edge of $G$ is in a subgraph induced by at least one of the $B_i$,

 (3) For every three vertices $i,j,k$ in $T$ with $j$ lying on the path from $i$ to $k$ in $T$, $B_i\cap B_k\subseteq B_j$.\\
\noindent The {\it tree-width} tw($G$) of $G$ is the minimum integer $p$ such that there exists a tree decomposition of $G$ with all subsets of cardinality at most $p+1$. {Tree-decomposition is one of the most general and effective techniques for
designing efficient algorithms, and a tree-like structure allows us to solve certain difficult problems. It is well-known that a connected graph has treewidth one if and only if it is tree. In terms of treewidth, the result of Radcliffe and Scott~\cite{RS-DM95} can be restated as follows.
\begin{thm}\label{THM: RSMain}\cite{RS-DM95}
 For any connected graph $T$ with $tw(T)=1$,  $f(T)\geq 2\lfloor\frac{|V(T)|+1}{3}\rfloor$ .
\end{thm}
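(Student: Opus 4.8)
The plan is to prove a rooted strengthening of the statement by induction on $n=|V(T)|$. For a tree $T$ with a distinguished vertex $r$, let $f^-(T,r)$ be the largest order of an induced subgraph of $T-r$ with all degrees odd, and let $h(T,r)$ be the largest order of an induced subgraph $H$ of $T$ with $r\in V(H)$, $\deg_H(r)$ even (possibly $0$), and $\deg_H(v)$ odd for every other $v\in V(H)$; put $\nu(T,r)=\max\{f^-(T,r),\,h(T,r)-1\}$ and $g(n)=2\lfloor(n+1)/3\rfloor$. I would prove simultaneously that $f(T)\ge g(n)$ --- which is the assertion, since $f(T)$ is independent of $r$ --- and the auxiliary inequality $\nu(T,r)\ge 2\lfloor n/3\rfloor$ (this last quantity equals $g(n)$ except when $n\equiv 2\pmod 3$, where it is $g(n)-2$). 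The point of $\nu$ is that it is exactly the leverage one needs to re-attach a pendant piece: a large odd induced subgraph of the remaining tree that either avoids the attaching vertex outright, or uses it with even degree so that one new pendant edge turns that degree odd. All stars (checked directly: take the whole star if the number of leaves is odd, drop one leaf if it is even) and all trees on at most a small fixed number of vertices serve as base cases.

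For the inductive step, assume $T$ is not a star and take a longest path $v_0v_1\cdots v_k$; then $k\ge 3$ and, since the path is longest, $v_2$ is the only non-leaf neighbour of $v_1$. Let $m\ge1$ be the number of leaf-neighbours of $v_1$ and set $T^*=T-(\{v_1\}\cup\{\text{leaf-neighbours of }v_1\})$, a tree on $n-m-1\ge2$ vertices, rooted at $v_2$. An odd induced subgraph of $T$ is built from an induced subgraph $H^*$ of $T^*$ together with $v_1$ and some of its leaf-neighbours: either $v_2\notin V(H^*)$ and $H^*$ has all degrees odd, in which case $v_1$ must be given an odd number of leaf-neighbours, or $v_2\in V(H^*)$ with $\deg_{H^*}(v_2)$ even and all other degrees odd, in which case $v_1$ is given an even number of leaf-neighbours (and $v_2$ then has odd degree overall). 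Taking the number of leaves as large as the required parity allows, and also allowing $v_1\notin V(H)$, yields
\[
f(T)\ \ge\ \max\{\,f(T^*),\ \ 1+p_{\mathrm{odd}}(m)+f^-(T^*,v_2),\ \ 1+p_{\mathrm{even}}(m)+h(T^*,v_2)\,\},
\]
where $p_{\mathrm{odd}}(m)$ and $p_{\mathrm{even}}(m)$ are the largest odd, respectively even, integers not exceeding $m$. The last two bracketed terms together are at least $m+\nu(T^*,v_2)$, and at least $m+1+\nu(T^*,v_2)$ when $m$ is odd; substituting the inductive bounds $f(T^*)\ge g(n-m-1)$ and $\nu(T^*,v_2)\ge 2\lfloor(n-m-1)/3\rfloor$ and checking, case by case on $m$ and on $n-m-1$ modulo $3$, that the floors match up, gives $f(T)\ge g(n)$. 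The cleanest instance is $m=1$ with $\deg(v_2)=2$, where $v_0v_1v_2$ is a pendant path whose vertices have degrees $1,2,2$: deleting these three vertices leaves a tree on $n-3$ vertices, and $f(T)\ge 2+g(n-3)=g(n)$ because $g(n)-g(n-3)=2$.

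To keep the induction going I must also re-establish $\nu(T,r)\ge 2\lfloor n/3\rfloor$ for $T$ itself. Rooting $T$ at $r$, with subtrees $S_1,\dots,S_d$ hanging at the children $w_1,\dots,w_d$ of $r$, one has $f^-(T,r)=\sum_i f(S_i)$ and a parallel description of $h(T,r)$ as $1$ plus a parity-constrained sum that, for each $i$, picks either $f^-(S_i,w_i)$ or $h(S_i,w_i)$ (the number of indices contributing via the second option being forced even). Inserting $f(S_i)\ge g(|S_i|)$ and $\nu(S_i,w_i)\ge 2\lfloor|S_i|/3\rfloor$, and distinguishing how many children of $r$ are leaves --- which make $f^-$ small but $h$ large --- yields the auxiliary bound for $T$ after the analogous floor-function bookkeeping.

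The crux, and where I expect essentially all of the difficulty, is this calibration: choosing the auxiliary parameter so that both recursions close, determining the exact form of the auxiliary inequality (it is not literally $\nu\ge g(n)$ but carries the residue-mod-$3$ correction above, and a still finer invariant may be needed to settle borderline configurations such as $m=2$ with $n\equiv2\pmod3$), and then confirming the finitely many genuinely small cases by direct computation. None of the individual steps is conceptually deep, but the bound is tight on paths, so forcing every inequality to hold with equality exactly where it must takes careful handling.
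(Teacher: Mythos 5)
First, note that the paper does not prove this statement at all; it is quoted from Radcliffe and Scott \cite{RS-DM95}, so your attempt can only be judged on its own terms (the cited proof is indeed a leaf-pruning induction of the same general flavour, but with a correctly calibrated strengthened hypothesis).

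Your write-up has a genuine, and in fact self-acknowledged, gap. The skeleton is sound: the longest-path reduction, the three options $f(T^*)$, $1+p_{\mathrm{odd}}(m)+f^-(T^*,v_2)$, $1+p_{\mathrm{even}}(m)+h(T^*,v_2)$, and the observation that their maximum is at least $m+\nu(T^*,v_2)$ (and $m+1+\nu$ for $m$ odd) are all correct, and this closes the induction for every value of $m$ and every residue of $n'=n-m-1$ modulo $3$ \emph{except} $m=2$ with $n'\equiv 2\pmod 3$. There the target is $g(n)=2\lfloor n'/3\rfloor+4$ while your guaranteed yield is only $m+\nu(T^*,v_2)\ge 2+2\lfloor n'/3\rfloor$, and the fallback $f(T^*)\ge g(n')=g(n)-2$ is also short by $2$; you compute exactly this deficit and then write that ``a still finer invariant may be needed.'' That sentence is the proof: with $\nu(T,r)\ge 2\lfloor n/3\rfloor$ as the auxiliary hypothesis the induction simply does not close, and no replacement invariant is ever specified, let alone shown to propagate. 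To repair it you would need something like the stronger claim that for $n\equiv 2\pmod 3$ one already has $\max\{f^-(T,r),\,h(T,r)+1\}\ge 2(n+1)/3$, and you would then have to re-run both recursions with that refinement. Compounding this, the inductive step for the auxiliary inequality itself (the decomposition of $h(T,r)$ over the subtrees at the children of $r$, with the even-count constraint) is only described, not verified; since the ``analogous floor-function bookkeeping'' already fails in one case of the main recursion, it cannot be taken on faith here. So the architecture is plausible and close to the standard argument, but as written the theorem is not proved.
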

}

The following theorem is our main result.
\begin{thm}\label{THM: Main}
For every graph $G$ with $tw(G)\le 2$ and without isolated vertices, $f(G)\geq \frac{2}{5}|V(G)|$.
\end{thm}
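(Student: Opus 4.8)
The plan is to prove a stronger, state-carrying statement by induction along a tree decomposition, in the spirit of Radcliffe and Scott's treatment of trees behind Theorem~\ref{THM: RSMain}. Fix a rooted tree decomposition of $G$ of width at most $2$, with an empty root bag. For a node $i$ let $G_i$ be the subgraph of $G$ induced by all vertices appearing in the bags of the subtree rooted at $i$, and let $W_i$ be the adhesion of $B_i$ to its parent (so $|W_i|\le 2$; the argument can equally be organized with $|W_i|\le 3$, matching the bags themselves). For a set $W$ and a \emph{state} $\sigma\colon W\to\{\mathsf{out},\mathsf{even},\mathsf{odd}\}$, let $\Phi(H,W,\sigma)$ be the largest $|V(J)|$ over induced subgraphs $J\subseteq H$ such that every vertex of $V(J)\setminus W$ has odd degree in $J$, each $x\in W$ lies in $J$ exactly when $\sigma(x)\ne\mathsf{out}$, and $\deg_J(x)$ has the prescribed parity for each $x\in W\cap V(J)$. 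Since the root gives $f(G)=\Phi(G,\emptyset,\emptyset)$, Theorem~\ref{THM: Main} follows from the inductive claim: for every node $i$ and every realizable state $\sigma$ on $W_i$,
$$\Phi(G_i,W_i,\sigma)\ \ge\ \tfrac{2}{5}\,|V(G_i)|-c(\sigma),$$
where $c$ is a fixed nonnegative function of the state with $c(\emptyset)=0$, of order $\tfrac{4}{5}$ when $\sigma$ labels a vertex $\mathsf{out}$ and smaller for $\mathsf{odd}$ and $\mathsf{even}$. (One also maintains the innocuous invariant that $G_i$ has no isolated vertex outside $W_i$.)

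The inductive step at each node is a gluing inequality along a small separator. In the model situation $G'=G_1\cup G_2$ with $V(G_1)\cap V(G_2)=W'$ and the edge inside $W'$ (if present) assigned to $G_1$ alone, one has, for every realizable target state $\tau$ on $W'$,
$$\Phi(G',W',\tau)\ \ge\ \max\Big\{\,\Phi(G_1,W',\sigma_1)+\Phi(G_2,W',\sigma_2)-|W'|\,\Big\},$$
the maximum over pairs $(\sigma_1,\sigma_2)$ \emph{compatible} with $\tau$: they agree with $\tau$ on which vertices of $W'$ are used, and the two parities XOR to that of $\tau$ at each used vertex (the $-|W'|$ accounts for the overlap, and is improved to the number of used boundary vertices when that is smaller). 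Feeding the inductive bounds for $G_1,G_2$ into this and choosing $(\sigma_1,\sigma_2)$ well must produce $\tfrac{2}{5}|V(G')|-c(\tau)$. The remaining node types are routine variants: at a forget node a boundary vertex is discharged, so it must be $\mathsf{out}$ or $\mathsf{odd}$ there; at a leaf, $G_i$ is a single small graph. In purely structural language, the same steps read: peel off a pendant vertex; or peel a degree-$2$ vertex $v$ with $N(v)=\{a,b\}$ together with the path $a\text{-}v\text{-}b$, distinguishing $a\sim b$ from $a\not\sim b$; or split off a leaf block at a cut-vertex — recovering in particular the pendant computation that yields Theorem~\ref{THM: RSMain}.

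The base cases are the graphs on at most five vertices, each settled by inspection. This is exactly where $C_5$ enters: $f(C_5)=2=\tfrac{2}{5}\cdot 5$, because no four of its vertices induce an odd graph, and this — together with what happens when one glues $C_5$'s — is what forces the entries of $c$ to be as large as they are and shows the constant $\tfrac{2}{5}$ is best possible.

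The main obstacle is calibrating the correction function $c$. It must be large enough that every one of the (numerous) gluing inequalities — over all boundary sizes, all realizable states, and all the small gadgets that get peeled — goes through; small enough on the ``good'' labels $\mathsf{out}$ and $\mathsf{odd}$ that the target $c(\emptyset)=0$ is still reached at the root; and mutually consistent between the $|W|=1$ and $|W|=2$ regimes. Tied to this is the tightness of the degree-$2$ peeling step, which adds only one vertex at a time and hence leaves essentially no numerical slack, so the extremal configurations (disjoint or amalgamated copies of $C_5$) have to be traced exactly through the recursion; getting this bookkeeping, the $a\sim b$ versus $a\not\sim b$ split, and the small base cases all to line up is what delivers the constant $\tfrac{2}{5}$.
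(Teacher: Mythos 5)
Your plan is a plausible-sounding strategy, but its central inductive hypothesis is false as stated, and the step you defer (``calibrating the correction function $c$'') is where essentially all of the work lies. Concretely, you claim that for \emph{every} realizable state $\sigma$ on $W_i$ one has $\Phi(G_i,W_i,\sigma)\ge\frac{2}{5}|V(G_i)|-c(\sigma)$ with $c(\sigma)$ a bounded quantity (of order $\frac{4}{5}$ per $\mathsf{out}$-labelled vertex). Take $G_i\cong K_{1,n}$ with the centre as the unique vertex of $W_i$ and $\sigma(\mbox{centre})=\mathsf{out}$: this state is realizable (witnessed by $J=\emptyset$), yet $\Phi(G_i,W_i,\sigma)=0$, since no leaf can have odd degree in an induced subgraph avoiding the centre, while $\frac{2}{5}|V(G_i)|$ is unbounded. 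Such a $G_i$ genuinely arises below a node of a width-$2$ decomposition, so the induction cannot close in this form; any correct state-carrying invariant must either restrict which states are demanded at each node or let the penalty depend on the local structure at the boundary rather than on the label alone, and you would then have to prove that the states actually needed in the gluing step are among those guaranteed. Beyond this, the configurations that make treewidth $2$ hard --- a vertex $u$ with $t_1$ pendant neighbours, $t_2$ common degree-$2$ neighbours with a branch vertex $v$ and $t_3$ with a branch vertex $w$, where one must select a parity-correct subset of size at least $\frac{2}{5}$ of what is deleted, with several exceptional small values of $(t_1,t_2,t_3)$ --- are not ``routine variants'' of leaf and path peeling; handling them is the bulk of any proof.

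For comparison, the paper does not run a dynamic program at all: it takes a minimum counterexample, shows the configurations above are reducible (Lemmas~\ref{LEM: P1} and~\ref{LEM: P2}, Corollaries~\ref{COR: c1}--\ref{COR: c3}), and then invokes the structural lemma of Lih, Wang and Zhu for $K_4$-minor-free graphs (Lemma~\ref{LEM: K_4-free}), which forces one of these reducible configurations to occur --- a contradiction. A tree-decomposition route is not ruled out in principle, but it would require both a correct invariant and essentially the same detailed case analysis, so as it stands your proposal has a genuine gap rather than a shorter proof.
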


The lower bound is sharp by considering the graph of which each component is a cycle of length $5$. { We remark that graph with treewidth at most two is also known as {\em $K_4$-minor-free graph}, see Proposition \ref{PROP: p1} in section 3. Some upper and lower bounds on graphs with small treewidth are also discussed in the last section.}

In this paper, standard notation follows from~\cite{Diestel-GT00}. In particular, for a graph $G$ and a set $S\subseteq V(G)$, let $G[S]$ be the subgraph induced by $S$ and let $N_G(S)$ be the union of neighbors of vertices in $S$,  for a vertex $u\in V(G)$, let $N_G^1(u)=\{x\ |\ x\in N_G(u) \mbox{ and } d_G(x)=1\}$ and $N_G^2(u)=\{x\ |\ x\in N_G(u) \mbox{ and } d_G(x)=2\}$, and denote $N_G^2(u,v)=N_G^2(u)\cap N_G^2(v)$.
A vertex of degree $k$ is called a {\it $k$-vertex}.  Define $S_G(u)=\{x\ |\ x\in N_G(u)$ with $d_G(x)\geq 3$  or there exists a vertex $z\in N_G^2(u,x)\}$. Let $D_G(u)=|S_G(u)|$. For two sets $S, T$, we use $S\setminus T$ denote $S-(S\cap T)$.

The rest of the paper is arranged as follows. In section 2, we establish  structural properties of minimum counterexample of Theorem~\ref{THM: Main}. Then the proof of Theorem~\ref{THM: Main} is presented in Section 3, and in the last section, we give some discussions.

\section{Properties of minimal counterexample}

Let $G$ be a minimum counterexample of Theorem~\ref{THM: Main} with respect to the order of $G$. The main idea of the proof is as the following.
We first pick some set $V_0\subset V(G)$  so that $G'=G-V_0$ has no isolated vertex,  by the minimality of $G$, $G'$ has an odd induced subgraph $H'$ with $|V(H')|/|V(G')|\ge 2/5$. We will find a set $S_0\subset V_0$ with $|S_0|\geq \frac{2}{5}|V_0|$ such that $S_0\cup V(H')$ induces an odd induced subgraph $H$ of $G$. We should be careful to remain the parity of the degrees of the vertices in $N_G(S_0)\cap V(H')$ and $S_0\cap N_G(V(H'))$. Here we allow $V(G')=\emptyset$.

\begin{lem}\label{LEM: P1}
Let $u$ be a vertex of $G$ with $D_G(u)=1$ and let $S_G(u)=\{v\}$.
Then $N_G^1(u)\cup N_G^2(u,v)=\emptyset$.
\end{lem}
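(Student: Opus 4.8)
The plan is a minimality argument in the style announced just before the statement. The first step is to pin down the local picture forced by $D_G(u)=1$. If $w\in N_G(u)$ and $w\ne v$, then $d_G(w)\le 2$, since a neighbour of $u$ of degree at least $3$ would itself belong to $S_G(u)$; moreover, if such a $w$ has $d_G(w)=2$, then its second neighbour must be $v$, for otherwise that second neighbour would be a vertex of $S_G(u)$ distinct from $v$ (it shares the degree-$2$ vertex $w$ with $u$, i.e.\ $w\in N_G^2(u)$). Hence $N_G(u)\subseteq\{v\}\cup N_G^1(u)\cup N_G^2(u,v)$. Write $A=N_G^1(u)$ and $B=N_G^2(u,v)$, and suppose toward a contradiction that $A\cup B\ne\emptyset$; the goal is then to produce an odd induced subgraph of $G$ of order at least $\frac25|V(G)|$.

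The mechanism is: choose a set $V_0$ localised at $u$, pass to $G'=G-V_0$ after deleting any vertices that become isolated, apply minimality to get an odd induced subgraph $H'$ of $G'$ with $|V(H')|\ge\frac25|V(G')|$, and then choose $S_0\subseteq V_0$ with $|S_0|\ge\frac25|V_0|$ so that $G[V(H')\cup S_0]$ is odd; this yields $f(G)\ge|V(H')|+|S_0|\ge\frac25|V(G)|$, a contradiction. The basic choice is $V_0\supseteq\{u\}\cup A\cup B$, and the guiding observation is that $v$ is the only vertex outside $\{u\}\cup A\cup B$ having a neighbour inside it: the vertices of $A$ are pendant at $u$, the vertices of $B$ have neighbourhood exactly $\{u,v\}$, and every neighbour of $u$ lies in $\{v\}\cup A\cup B$. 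So, if we put $u$ and all of $A$ into $S_0$ (each vertex of $A$ then has degree $1$) and decide about $B$, the only parity we can disturb outside $S_0$ is that of $v$.

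To control $\deg(v)$ I would split on $r:=|N_G^1(v)|$. If $r=0$, take $V_0=\{u,v\}\cup A\cup B$; deleting $v$ creates no isolated vertices (the only vertices that could be isolated are pendant-neighbours of $v$, of which there are none), and crucially $v\notin V(H')$. Then $S_0=\{u\}\cup A\cup B$ works after possibly discarding one vertex of $A\cup B$ to make $\deg_H(u)=|A|+|B|$ odd, and the bound $|S_0|\ge\frac25|V_0|$ reduces to $|A|+|B|\ge 2$, the cases $|A|+|B|=1$ being finite and checked directly. If $r\ge 1$, take $V_0=\{u,v\}\cup A\cup B\cup N_G^1(v)$, again so that $v\notin V(H')$; now one of two re-insertion patterns works: either $S_0=\{u\}\cup A\cup B$ (discard one vertex for parity), which suffices when $r$ is small compared to $|A|+|B|$, or $S_0$ is built around $v$ — $v$ together with an odd-sized subset of $N_G^1(v)$ and either $\{u\}\cup A$ or $B$, arranged so that $\deg_H(v)$ stays odd and every re-inserted vertex has odd degree — which suffices when $|A|$ or $|B|$ is small. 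A short arithmetic check shows that the ranges of validity of these patterns cover all but finitely many triples $(|A|,|B|,r)$, which are disposed of by hand.

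I expect essentially all of the real work to sit in the last step, specifically in two remaining subtleties. First, re-inserting $v$ into $S_0$ is legitimate only if $v$'s remaining (degree $\ge 2$) neighbours are avoided by $H'$, so one must either argue this can be arranged or enlarge $V_0$ further without cascading isolated deletions. Second, $V_0$ may swallow the whole component of $G$ containing $v$; in that case the component is a ``book of triangles on the edge $uv$ with pendants at $u$ and $v$'' (or, when $u\not\sim v$, the analogous graph built from the paths through $B$), and one checks directly that it has an odd induced subgraph of order $\max\{|A|+|B|,\ |B|+r,\ |A|+r\}\ge\frac23(|A|+|B|+r)$, which is at least $\frac25$ of its order $2+|A|+|B|+r$ as soon as $|A|+|B|+r\ge 3$, the smaller cases either failing $D_G(u)=1$ or (as with $K_4$ minus an edge) satisfying the bound outright; combining this with minimality applied to the rest of $G$ again beats $\frac25|V(G)|$.
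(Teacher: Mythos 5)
Your setup (the local picture $N_G(u)\subseteq\{v\}\cup N_G^1(u)\cup N_G^2(u,v)$, the delete-$V_0$/reattach-$S_0$ mechanism, and the observation that $v$ is the only vertex outside $\{u\}\cup A\cup B$ with a neighbour inside it) is correct and matches the paper, and your $r=0$ case is sound (it is essentially the paper's Case 1, which in fact also absorbs $r=1$ by putting the single pendant of $v$ into $V_0$). The genuine gap is in your $r\ge 1$ case, precisely at the point you flag as a ``subtlety'' and leave unresolved: your pattern (b) deletes $v$ (it lies in $V_0$) and then re-inserts it into $S_0$. This is not legitimate in general, because $v$ may have arbitrarily many neighbours of degree at least $2$ outside $V_0$; any of these that land in $V(H')$ have their parity destroyed when $v$ comes back, and $v$'s own degree in $H$ then depends on $|N_G(v)\cap V(H')|$, which you do not control. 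You cannot repair this by enlarging $V_0$ to swallow $N_G(v)$ either, since those external neighbours can be high-degree vertices contributing nothing to $S_0$, so $|S_0|/|V_0|$ drops below $2/5$. Pattern (b) is not a dispensable refinement: for, say, $|A|=1$, $|B|=0$ and $r$ large, pattern (a) gives $|S_0|=2$ against $|V_0|=r+3$, so the whole case $r\ge 2$ with $r$ large relative to $|A|+|B|$ rests on the broken pattern.

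The paper's resolution of exactly this case ($|N_G^1(v)|\ge 2$) uses a different idea that your sketch is missing: do not delete $v$ at all. One deletes only $V_0=A\cup B\cup\{u,x\}$ for a single pendant $x$ of $v$, so that $v$ survives in $G'$ without becoming isolated, and then proves two auxiliary claims --- that $v$ must lie in $V(H')$ (else a star at $u$ already wins), and that $t_2\le t_1$ --- which guarantee one can extend $H'$ either by pendants of $u$ together with $u$ (touching $v$ not at all when $uv\notin E(G)$) or by $u$ and $x$ together (raising $\deg(v)$ by exactly $2$ when $uv\in E(G)$), preserving all parities. In other words, the parity of $v$ is delegated to the minimality hypothesis rather than re-engineered by hand, and the inequality $t_2\le t_1$ is what makes the counting close. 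Without this (or an equivalent device), your argument does not go through.
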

\begin{proof}[Proof]
Suppose to the contrary that $G$ has a vertex $u$ with $D_G(u)=1$ and $N_G^1(u)\cup N_G^2(u,v)\not=\emptyset$. Let $t_1=|N_G^1(u)|$ and $t_2=|N_G^2(u,v)|$. Then $t_1+t_2>0$.

\noindent{\bf Case 1}. $|N_G^1(v)|\le 1$.

Set $V_0=N_G^1(u)\cup N_G^2(u,v)\cup \{u,v\}\cup N_G^1(v)$ and $G'=G-V_0$. Then $G'$ has no isolated vertex, so, by the minimality of $G$, $G'$ has an odd induced subgraph $H'$ with $|V(H')|\ge \frac 25 |V(G')|$. Now let $S_0=V_0\setminus (N_G^1(v)\cup\{v\})$.  Then $G[S_0]\cong K_{1, t_1+t_2}$, and so  $G[S_0]$ contains an odd induced subgraph $K=K_{1,t}$ with $t=t_1+t_2$ if $t_1+t_2$ is odd or $t=t_1+t_2-1$ if $t_1+t_2$ is even. So $(t+1)/|V_0|\ge (t+1)/(t_1+t_2+3)\ge 2/5$. Furthermore, we have $N_G(V(K))\cap V(H')=\emptyset$ and $V(K)\cap N_G(V(H'))=\emptyset$. Hence $H=H'\cup K$ is an odd induced subgraph of $G$ with $|V(H)|\ge \frac 25|V(G)|$, a contradiction.





\noindent{\bf Case 2}. $|N_G^1(v)|\ge 2$.

Choose a vertex $x\in N_G^1(v)$ and set $V_0=N_G^1(u)\cup N_G^2(u,v)\cup \{u,x\}$ and $G'=G- V_0$. Then $G'$ has no isolated vertex and so, by the minimality of $G$, $G'$ has an odd induced subgraph $H'$ with $|V(H')|/|V(G')|\ge 2/5$.

\begin{claim}\label{CLAIM: l5c1}
$v$ must be in $V(H')$.
\end{claim}
Suppose to the contrary that $v\notin V(H')$. Set $S_0=V_0\setminus\{x\}$, then $G[S_0]\cong K_{1, t_1+t_2}$, and so $G[S_0]$ contains an odd induced subgraph $K=K_{1, t}$ with $t=t_1+t_2$ or $t=t_1+t_2-1$ with respect to the parity of $t_1+t_2$.  Note that $(t+1)/|V_0|=(t+1)/(t_1+t_2+2)> 2/5$, $N_G(V(K))\cap V(H')=\emptyset$ and $V(K)\cap N_G(V(H'))=\emptyset$. Therefore, $H=K\cup H'$ is an odd induced subgraph of $G$ with $|V(H)|/|V(G)|>2/5$, a contradiction. The claim is true.

\vspace{5pt}
Now suppose $v\in V(H')$.

\begin{claim}\label{l5c2}
We have $t_2\le t_1$.
\end{claim}
Suppose to the contrary that $t_2\geq t_1+1$. Set $S_0=N_G^2(u,v)\cup \{x\}$ if $t_2$ is odd or $S_0=N_G^2(u,v)$ if $t_2$ is even, then $S_0\cup V(H')$ still induces an odd subgraph $H$ of $G$ with $|V(H)|=|S_0|+|V(H')|\ge \frac  25|V_0|+\frac 25|V(G')|=\frac 25|V(G)|$,  a contradiction, where the second inequality holds since $|S_0|/|V_0|\ge |S_0|/(t_1+t_2+2)\ge |S_0|/(2t_2+1)\ge 2/5$. Hence the claim holds.

Now suppose $t_2\leq t_1$ and let $T_1$ (resp. $T_2$) be a maximum subset of odd (resp. even) order in $N_G^1(u)$. Set $S_0=T_1\cup\{u\}$ if $uv\notin E(G)$ or $S_0=T_2\cup \{u,x\}$ if $uv\in E(G)$. In both cases, $|S_0|/|V_0|=|S_0|/(t_1+t_2+2)\ge |S_0|/(2t_1+2)\geq 2/5$ unless $t_1=t_2=2$ and $uv\notin E(G)$.
Then $S_0\cup V(H')$ induces an odd subgraph $H$ of $G$ with $|V(H)|=|S_0|+|V(H')|\ge 2/5|V_0|+2/5|V(G')|=2/5|V(G)|$, a contradiction. For $t_1=t_2=2$ and $uv\notin E(G)$, reset $V_0=N_G^1(u)\cup N_G^2(u,v)\cup \{u\}=N_G(u)\cup\{u\}$ and let $G'=G-V_0$, then $G'$ has no isolated vertex and so, by the minimality of $G$, $G'$ has an odd induced subgraph $H'$ with $|V(H')|/|V(G')|\ge 2/5$.  Let $N_G^1(u)=\{a,b\}$ and set $S_0=\{a,u\}$.
Then $S_0\cup V(H')$ induces an odd subgraph $H$ of $G$ with $|V(H)|\ge 2+\frac 25|V(G')|=\frac 25|V(G)|$, a contradiction again.

This completes the proof of the lemma.
\end{proof}

\begin{lem}\label{LEM: P2}
Let $u$ be a vertex of $G$ with $D_G(u)=2$ and let $S_G(u)=\{v,w\}$. Then $N_G^1(u)\cup N_G^2(u,v)\cup N_G^2(u,w)=\emptyset$.
\end{lem}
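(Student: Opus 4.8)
The plan is to mimic the proof of Lemma~\ref{LEM: P1}, now carrying two ``anchor'' vertices $v,w$ instead of one. Assume for contradiction that $u$ has $D_G(u)=2$, $S_G(u)=\{v,w\}$, and that $N_G^1(u)\cup N_G^2(u,v)\cup N_G^2(u,w)\ne\emptyset$; write $t_1=|N_G^1(u)|$, $t_2=|N_G^2(u,v)|$, $t_3=|N_G^2(u,w)|$, so that $t_1+t_2+t_3>0$. First I would record the basic structure around $u$: since $D_G(u)=2$, every $2$-vertex in $N_G(u)$ has its second neighbour in $\{v,w\}$ (otherwise that neighbour would lie in $S_G(u)$), hence $N_G(u)\subseteq N_G^1(u)\cup N_G^2(u,v)\cup N_G^2(u,w)\cup\{v,w\}$ and the subgraph of $G$ induced by $u$ together with its pendant neighbours and its private $2$-vertices is a star with centre $u$. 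I would also invoke Lemma~\ref{LEM: P1} to dispose of degenerate positions of $v$ and $w$ (for instance, if $D_G(v)=1$ then $v$ has no pendant neighbour and no private $2$-vertex), which restricts how $v$ and $w$ can meet $N_G^1(u)\cup N_G^2(u,v)\cup N_G^2(u,w)$.

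The engine of the proof is the same deletion-and-reattach argument: choose $V_0\subseteq V(G)$ made up of $u$, all of $N_G^1(u)\cup N_G^2(u,v)\cup N_G^2(u,w)$, and, when needed, of $v$, $w$ and a bounded number of their pendant neighbours, chosen so that $G'=G-V_0$ has no isolated vertex; by minimality $G'$ has an odd induced subgraph $H'$ with $|V(H')|\ge\frac25|V(G')|$; then select an induced substar $S_0$ of the star $G[V_0]$ (at centre $u$) whose order has the correct parity, so that $H=G[S_0\cup V(H')]$ is odd. As in Lemma~\ref{LEM: P1} the decisive requirement is to pick $S_0$ with $|S_0|\ge\frac25|V_0|$ \emph{and} so that joining $S_0$ to $H'$ does not flip the parity of $d_H(v)$ or $d_H(w)$ when $v$ or $w$ lies in $V(H')$: concretely one keeps an \emph{even} number of the vertices of $N_G^2(u,v)$ (resp.\ of $N_G^2(u,w)$), with an adjustment of $\pm 1$ in the presence of the edge $uv$ (resp.\ $uw$), and if this forces the wrong parity at the centre $u$ one restores it by toggling one pendant neighbour of $u$, or of $v$/$w$.

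I expect the write-up to branch, exactly as Lemma~\ref{LEM: P1} does, according to whether $|N_G^1(v)|$ and $|N_G^1(w)|$ are $\le 1$ or $\ge 2$, and inside each branch according to whether $v$ and/or $w$ ends up in $V(H')$; comparisons of $t_2,t_3$ against $t_1$ (and against the sizes of the pendant sets) in the style of Claim~\ref{l5c2} decide which parts of $V_0$ to retain. When $v\notin V(H')$ one is free to take the largest odd substar of $G[V_0]$ and the ratio comes out comfortably above $\frac25$; the binding cases are the small ones — a handful of small values of $t_1,t_2,t_3$, in the spirit of the $t_1=t_2=2$ exception in Lemma~\ref{LEM: P1} — which are settled by re-selecting a smaller $V_0$ (for instance $V_0=N_G(u)\cup\{u\}$) and checking the bound by hand.

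The main obstacle, and the place where properties of $tw(G)\le 2$ (equivalently $K_4$-minor-freeness) come in, is controlling the vertices that deleting $V_0$ might isolate: namely $v$ or $w$ when its whole neighbourhood sits inside $\{u\}\cup N_G^2(u,v)$ (resp.\ $\{u\}\cup N_G^2(u,w)$), any $2$-vertex lying ``between'' $v$ and $w$ (a vertex of $N_G^2(v,w)$), and the configuration $vw\in E(G)$ with $v,w$ of small degree. $K_4$-minor-freeness forbids $u,v,w$ from having a common neighbour once $uv,uw,vw$ are all present, and limits how these ``bubbles'' can pile up, so that enlarging $V_0$ by only a bounded number of vertices near $v$ and $w$ makes $G'$ isolated-vertex-free; keeping $|S_0|\ge\frac25|V_0|$ across these enlargements — noting in particular that $2$-vertices between $v$ and $w$ inflate $|V_0|$ without contributing to the star $S_0$, so they must be absorbed only when unavoidable — is the delicate bookkeeping the proof has to carry out.
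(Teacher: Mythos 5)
Your high-level strategy is the same as the paper's: assume $G$ is the minimal counterexample, delete a set $V_0$ built from $u$, its pendant and private $2$-vertex neighbours, and a bounded number of vertices near $v$ and $w$; apply minimality to $G'=G-V_0$; and re-attach a parity-correct substar $S_0$ of the star $G[V_0\cap(N_G(u)\cup\{u\})]$ while preserving the parity of $d_H(v)$ and $d_H(w)$. Your structural observation that $N_G(u)\subseteq N_G^1(u)\cup N_G^2(u,v)\cup N_G^2(u,w)\cup\{v,w\}$ is correct, and your use of Lemma~\ref{LEM: P1} to rule out $v$ or $w$ becoming isolated is exactly the paper's Claim~\ref{l6c5'}. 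However, what you have written is a plan, not a proof: essentially all of the content of this lemma lives in the case analysis you defer. The generic ratio $(s+1)/(t_1+t_2+t_3+3)\ge 2/5$ fails for a specific list of triples (e.g.\ $t_1+t_2=2,t_3=1$; $t_1+t_2=t_3=2$; $t_1+t_2=t_3=4$; $t_1=t_2+t_3\in\{2,4\}$), and each failure must be repaired by a \emph{different} re-choice of $V_0$ and $S_0$ whose correctness (no isolated vertices in the new $G'$, correct parities, ratio at least $2/5$) has to be checked individually; you neither identify these triples nor exhibit the repairs. You also miss the hardest configuration: when \emph{both} $v$ and $w$ lie in $V(H')$, the vertices of $N_G^2(u,v)$ and $N_G^2(u,w)$ can only be used in even-sized blocks, which forces the inequality $t_2+t_3\le t_1$ (the paper's Claim~\ref{CLAIM: l6c8}) before the ratio can be salvaged from $N_G^1(u)$ alone; nothing in your sketch anticipates that the bulk of $S_0$ must then come from pendant neighbours of $u$.

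A secondary point: your appeal to $K_4$-minor-freeness is a red herring for this lemma. The paper proves Lemma~\ref{LEM: P2} without any use of treewidth; the only places $tw(G)\le 2$ enters the whole argument are via Lemma~\ref{LEM: K_4-free} in the proof of Theorem~\ref{THM: Main}. The vertices at risk of being isolated by deleting $V_0$ are handled either by putting them into $V_0$ (the single vertex $x\in N_G^1(w)\cup\bar N_G^2(v,w)$ in Subcase~1.1) or by Lemma~\ref{LEM: P1} (Claim~\ref{l6c5'}); no minor-theoretic fact about common neighbours of $u,v,w$ is needed. Relying on such a fact would not make the argument wrong, but it signals that the mechanism you propose for controlling isolation is not the one that actually closes the cases, and without the explicit case-by-case repairs the proof is incomplete.
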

\begin{proof}[Proof]
Suppose to the contrary that $G$ has a vertex $u$ with $S_G(u)=\{v,w\}$ and $N_G^1(u)\cup N_G^2(u,v)\cup N_G^2(u,w)\not=\emptyset$. Let $t_1=|N_G^1(u)|$, $t_2=|N_G^2(u,v)|$ and $t_3=|N_G^2(u,w)|$. Then $t_1+t_2+t_3>0$. Let $\bar{N}_G^2(v,w)=N_G^2(v, w)\setminus\{u\}$.


\vspace{5pt}
\begin{claim}\label{CLAIM: l6c2}
If $N^1_G(v)=\emptyset$ then $N^1_G(w)\cup \bar{N}_G^2(v,w)\not=\emptyset$; symmetrically, if $N^1_G(w)=\emptyset$ then $N^1_G(v)\cup \bar{N}_G^2(v,w)\not=\emptyset$.
\end{claim}
We only prove the first statement, the second one can be proved similarly. Suppose to the contrary that $N^1_G(w)\cup \bar{N}_G^2(v,w)=\emptyset$. Set $V_0=N_G(u)\cup \{u,v,w\}$ and $G'=G-V_0$. Then $G'$ has no isolated vertex and so, by the minimality of $G$, $G'$ has an odd induced subgraph $H'$ with $|V(H')|\ge \frac 25 |V(G')|$. Let $S$ be a maximum subset of $N^1_G(u)\cup N^2_G(u,v)\cup N_G^2(u,w)$ so that $s=|S|$ is odd. Then $S_0=S\cup\{u\}$ induces an odd subgraph $K\cong K_{1,s}$ of $G[V_0]$, furthermore $|S_0|/|V_0|\ge (s+1)/(t_1+t_2+t_3+3)\ge 2/5$.
Note that $N_G(S_0)\cap V(H')=\emptyset$ and $S_0\cap N_G(V(H'))=\emptyset$. Therefore, $H=K\cup H'$ is an odd induced subgraph of  $G$ with $|V(H)|\ge \frac 25|V_0|+\frac 25|V(G')|=\frac 25|V(G)|$, a contradiction. The claim is true.

\vspace{5pt}
\noindent{\bf Case 1}. $N_G^1(v)=\emptyset$.

\vspace{5pt}

\noindent{\bf Subcase 1.1}. $|N_G(w)\setminus (N_G^2(u,w)\cup \{u,v\})|\le 1$.

Note that $|N_G(w)\setminus (N_G^2(u,w)\cup \{u,v\})|\le 1$ implies that $|N_G^1(w)\cup \bar{N}_G^2(v,w)|\le 1$. By Claim~\ref{CLAIM: l6c2}, $|N_G^1(w)\cup \bar{N}_G^2(v,w)|= 1$ and so $N_G(w)\setminus (N_G^2(u,w)\cup \{u,v\})=N_G^1(w)\cup \bar{N}_G^2(v,w)$. Let $N_G^1(w)\cup \bar{N}_G^2(v,w)=\{x\}$  and set $V_0=N_G(u)\cup \{u,v,w,x\}$ and $G'=G- V_0$. Then $G'$ has no isolated vertex and so, by the minimality of $G$, $G'$ has an odd induced subgraph $H'$ with $|V(H')|\ge \frac 25 |V(G')|$. Let $S$ be a maximum subset of $N^1_G(u)\cup N^2_G(u,v)\cup N^2_G(u,w)$ so that $s=|S|$ is odd. Then $S_0=S\cup\{u\}$ induces an odd subgraph $K\cong K_{1,s}$ of $G[V_0]$, furthermore $|S_0|/|V_0|\ge (s+1)/(t_1+t_2+t_3+4)\ge 2/5$ unless $t_1+t_2+t_3=2$.
Note that $N_G(S_0)\cap V(H')=\emptyset$ and $S_0\cap N_G(V(H'))=\emptyset$. Hence $H=K\cup H'$ is an odd induced subgraph of  $G$ with $|V(H)|\ge \frac 25|V_0|+\frac 25|V(G')|=\frac 25|V(G)|$ provided that $t_1+t_2+t_3\not=2$, a contradiction.

For $t_1+t_2+t_3=2$, notice that $E_G(w, V(G'))=\emptyset$ because $N_G(w)\setminus (N_G^2(u,w)\cup \{u,v\})=N_G^1(w)\cup \bar{N}_G^2(v,w)$. If $E_G(v, V(G'))=\emptyset$ then $G$ is a graph of order six, it can be easily checked that $G$ cannot be a counterexample. If $t_3=2$ then $S_0=N_G^2(u,w)\cup \{w,x\}$ induces an odd subgraph $K\cong K_{1,3}$, and therefore $H=K\cup H'$ is an odd induced subgraph of $G$ with $|V(H)|\ge 4+\frac 25|V(G')|>\frac 25|V(G)|$, a contradiction. Hence suppose $E_G(v, V(G'))\not=\emptyset$ and $t_3<2$. Reset $V_0=(N_G(u)\cup \{u,w, x\})\setminus\{v\}$ and $G'=G-V_0$. Then $G'$ has no isolated vertex and so, by the minimality of $G$, $G'$ has an odd induced subgraph $L'$ with $|V(L')|\ge \frac 25|V(G')|$. If $v\notin V(L')$ or $vw, vx\notin E(G)$, then $\{w,x\}\cup V(L')$ induces an odd subgraph $H$ of $G$ with $|V(H)|\ge 2+\frac 25|V(G')|\ge\frac 25|V(G)|$, a contradiction. So suppose $v\in V(L')$ and $vw\in E(G)$ or $vx\in E(G)$. If $N_G(v)\cap V_0$ has two nonadjacent vertices, say $\{a,b\}$, then $\{a,b\}\cup V(L')$ induces an odd subgraph of $G$ with  order at least $\frac 25|V(G)|$, a contradiction. This implies that $N_G^2(u,v)=\emptyset$ (i.e $t_2=0$), $vx\notin E(G)$ (i.e. $x\in N_G^1(w)$) and $vw, uw\in E(G)$ (otherwise, it is easy to choose two nonadjacent vertices from $N_G^2(u,v)\cup\{u,w,x\}$). As $t_1+t_2+t_3=2$, $t_2=0$, and $t_3<2$, we have $t_1>0$. Choose $a\in N_G^1(u)$, then  $\{a,u,w,x\}\cup V(L')$ induces an odd subgraph $H$ of $G$ with $|V(H)|\ge 4+\frac 25|V(G')|>\frac 25|V(G)|$, a contradiction.

\vspace{5pt}

\noindent{\bf Subcase 1.2}. $|N_G(w)\setminus (N_G^2(u,w)\cup \{u,v\})|\geq 2$.

Choose $x\in N_G^1(w)\cup \bar{N}_G^2(v,w)$ (this can be done because $N_G^1(w)\cup \bar{N}_G^2(v,w)\not=\emptyset$ by Claim~\ref{CLAIM: l6c2}) and set $V_0=(N_G(u)\cup \{u,v,x\})\setminus \{w\}$ and $G'=G-V_0$. Then $G'$ has no isolated vertex and so, by the minimality of $G$, $G'$ has an odd induced subgraph $H'$ with $|V(H')|\ge\frac 25|V(G')|$.

\begin{claim}\label{CLAIM: l6c3}
$w\in V(H')$.
\end{claim}
If $w\notin V(H')$, choose a maximum subset $S$ of $N_G(u)\setminus\{v, w\}$ so that $s=|S|$ is odd, then $S_0=S\cup\{u\}$ induces an odd subgraph $K\cong K_{1,s}$ of $G[V_0]$ such that $|S_0|/|V_0|=(s+1)/(t_1+t_2+t_3+3)\geq 2/5$. Clearly, $N_G(S_0)\cap V(H_0)=\emptyset$ and $S_0\cap N_G(V(H_0))=\emptyset$. Hence $H=K\cup H'$ is an odd induced subgraph of $G$ with $|V(H)|\ge \frac 25|V(G)|$, a contradiction. The claim holds.


\begin{claim}\label{CLAIM: l6c4}
 $t_3\le t_1+t_2$.
\end{claim}
If $t_3\ge t_1+t_2+1$, choose a maximum subset $S_0$ of $N_G^2(u,w)\cup \{x\}$ so that $|S_0|$ is even, then $|S_0|/|V_0|=|S_0|/(t_1+t_2+t_3+3)\geq |S_0|/(2t_3+2)\ge 2/5$ unless $t_3=2$ and $t_1+t_2=1$. Therefore, $S_0\cup V(H')$ induces an odd subgraph $H$ of $G$ with $|V(H)|\ge\frac 25|V(G)|$  unless $t_3=2$ and $t_1+t_2=1$.
For $t_3=2$ and $t_1+t_2=1$, reset $V_0=(N_G(u)\cup \{u,v\})\setminus \{w\}$ and $G'=G-V_0$,  then, again by the minimality of $G$,  $G'$ has an odd induced subgraph $L'$ with $|V(L')|\ge \frac 25|V(G')|$. If $w\in V(L')$, set $S_0=N_G^2(u,w)$, and if $w\notin V(L')$, set $S_0=\{u,y\}$, where $y$ is a vertex in $N_G^2(u,w)$. In both cases, $S_0\cup V(L')$ induces an odd subgraph $H$ of $G$ with $|V(H)|\ge \frac 25|V(G)|$. Therefore, we always obtain a contradiction and so the claim follows.

\vspace{5pt}

Now let $S$ be a maximum subset of $N_G^1(u)\cup N_G^2(u,v)$ so that $s=|S|$ is even if $uw\in E(G)$, and $s=|S|$ is odd if $uw\notin E(G)$. Set $S_0=S\cup \{u,x\}$ if $uw\in E(G)$ and $S_0=S\cup\{u\}$ if $uw\notin E(G)$. Clearly, $S_0\cup V(H')$ induces an odd subgraph $H$ of $G$ and furthermore, for $uw\in E(G)$, $|S_0|/|V_0|\ge (s+2)/(t_1+t_2+t_3+3)\ge (t_1+t_2+1)/(2t_1+2t_2+3)\geq 2/5$; and for $uw\notin E(G)$, $|S_0|/|V_0|=(s+1)/(t_1+t_2+t_3+3)\geq 2/5$ unless $t_1+t_2=2$, $t_3=1$ or $t_1+t_2=2$, $t_3=2$ or $t_1+t_2=t_3=4$. Therefore, but some exceptions, $H$ is an odd induced subgraph with $|V(H)|\ge\frac 25|V(G)|$, a contradiction.  Note that all the exceptions occur under the assumption $uw\notin E(G)$. In the following of the case, we show that each of the three exceptions cannot occur in the minimal counterexample $G$ as well.

For $t_1+t_2=2$ and $t_3=1$, reset $V_0=N_G(u)\cup \{u,v\}$ and let $G'=G-V_0$, then, by the minimality of $G$, $G'$ has an odd induced subgraph $L'$ with $|V(L')|\ge \frac 25|V(G')|$.  Choose a vertex $a\in N_G^1(u)\cup N_G^2(u,v)$, then $S_0=\{u,a\}$ induces an odd subgraph $K\cong K_{1,1}$ of $G[V_0]$. As $N_G(S_0)\cap V(L')=\emptyset$ and $S_0\cap N_G(V(L'))=\emptyset$, $H=K\cup L'$ is an odd induced subgraph of $G$ with $|V(H)|\ge \frac 25|V(G)|$, a contradiction.

For $t_1+t_2=t_3=2$. If $|N_G^1(w)\cup \bar{N}_G^2(v,w)|\le 2$, reset $V_0=N_G(u)\cup N_G^1(w)\cup \bar{N}_G^2(v,w)\cup\{u,v,w,x\}$, then $G'=G-V_0$ has no isolated vertex and so, by the minimality of $G$, $G'$ has an odd induced subgraph $L'$ with $|V(L')|\ge\frac 25|V(G')|$. Let $S$ be a subset of $N_G(u)\setminus\{v\}$ with $s=|S|=3$ (this can be done because $|N_G(u)|\ge t_1+t_2+t_3=4$). Then $S_0=S\cup \{u\}$ induces an odd subgraph $K\cong K_{1,3}$ of $G[V_0]$ and therefore $H=K\cup L'$ is an odd induced subgraph of $G$ with $|V(H)|\ge \frac 25|V(G)|$, a contradiction.
Now suppose $|N_G^1(w)\cup \bar{N}_G^2(v,w)|\ge 3$. Choose a vertex $y\in N_G^1(w)\cup \bar{N}_G^2(v,w)$ with $y\not=x$. Reset $V_0=N_G(u)\cup \{u,v,x,y\}$ and $G'=G-V_0$, then, by the minimality of $G$, $G'$ has an odd induced subgraph $L'$ with $|V(L')|\ge\frac 25|V(G')|$. Let $S_0=N_G^2(u,w)\cup \{x,y\}$  if $w\in V(L')$, and let $S_0=S\cup \{u\}$  if $w\notin V(L')$, where $S$ is a maximum subset of $N_G(u)\setminus\{v\}$ with $s=|S|=3$. Clearly, $S_0\cup V(L')$ induces an odd subgraph $H$ of $G$ with $|V(H)|>\frac 25|V(G)|$, a contradiction.

For $t_1+t_2=t_3=4$, reset $V_0=N_G(u)\cup \{u,v\}$ and $G'=G-V_0$, then $G'$ has no isolated vertices and so, by the minimality of $G$, $G'$ has an odd induced subgraph $L'$ with $|V(L')|\ge\frac 25|V(G')|$.   Let $S_0=N_G^2(u,w)$  if $w\in V(L')$, or let  $S_0=N_G^1(u)\cup N_G^2(u,v)\cup \{u\}$ if $w\notin V(L')$.
Then $|S_0|/|V_0|\ge 2/5$ and $S_0\cup V(L')$ induces an odd subgraph $H$ of $G$ with $|V(H)|\ge \frac 25|V(G)|$, a contradiction.

This proves Case 1. By symmetry, we may also assume $N_G^1(w)\neq \emptyset$ to verify the following remaining case.

\vspace{5pt}
\noindent{\bf Case 2}. $N_G^1(v)\neq \emptyset$.

 Choose $x\in N_G^1(v)$ and $y\in N_G^1(w)$, set $V_0=(N_G(u)\cup \{u,x,y\})\setminus \{v,w\}$ and $G'=G-V_0$.

 \begin{claim}\label{l6c5'}
 $G'$ has no isolated vertex.
 \end{claim}
Suppose to the contrary that $G'$ has isolated vertices. Then $v$ or $w$ must be an isolated vertex of $G'$. Without loss of generality, assume $v$ is an isolated vertex of $G'$. Then $D_G(v)=1$. But $N_G^1(v)\not=\emptyset$, this is a contradiction to Lemma~\ref{LEM: P1}.

\vspace{5pt}
Hence $G'$ has no isolated vertex and so, by the minimality of $G$, $G'$ has an odd induced subgraph $H'$ with $|V(H')|\ge\frac 25|V(G')|$.

\begin{claim}\label{CLAIM: l6c5}
$H'$ contains at least one of $\{v, w\}$.
\end{claim}
Suppose to the contrary that $H'$ contains none of $\{v, w\}$. Let $S$ be a maximum subset of $N_G(u)\setminus\{v,w\}$ so that $s=|S|$ is odd.
Then $S_0=S\cup \{u\}$ induces an odd subgraph $K\cong K_{1,s}$ with $|S_0|/|V_0|=(s+1)/(t_1+t_2+t_3+3)\geq 2/5$. Note that $N_G(S_0)\cap V(H')=\emptyset$ and $S_0\cap N_G(V(H'))=\emptyset$. Thus $H=K\cup H'$ is an odd induced subgraph of $G$ with $|V(H)|\ge \frac 25|V(G)|$, a contradiction.


\begin{claim}\label{CLAIM: l6c6}
If $w\in V(H')$ then $t_3\le t_1+t_2$. Symmetrically, if $v\in V(H')$ then $t_2\le t_1+t_3$.
\end{claim}
We show that $t_3\le t_1+t_2$ when $w\in V(H')$. Suppose to the contrary that $t_3\ge t_1+t_2+1$. Let $S_0$ be a maximum subset of $N_G^2(u,w)\cup \{y\}$ such that $|S_0|$ is even. Then $S_0\cup V(H')$ induces an odd subgraph $H$ of $G$ with $|V(H)|=|S_0|+|V(H')|\ge \frac 25|V_0|+\frac 25|V(G')|=\frac 25 |V(G)|$ unless $t_3=2$ and $t_1+t_2=1$.

For $t_3=2$ and $t_1+t_2=1$, reset $V_0=(N_G(u)\cup \{u,x\})\setminus \{v,w\}$ and $G'=G-V_0$, then $G'$ has no isolated vertex and so $G'$ has an odd induced subgraph $L'$ with $|V(L')|\ge \frac 25|V(G')|$ by the minimality of $G$. If $w\in V(L')$, let $S_0= N_G^2(u,w)$, then  $S_0\cup V(L')$ induces an odd subgraph $H$ with $|V(H)|\ge \frac 25|V(G)|$. Now suppose $w\notin V(L')$. If $v\notin V(L')$, choose a vertex $z$ from $N^1_G(u)\cup N^2_G(u,v)$, then $\{u, z\}\cup V(L')$ induces an odd subgraph $H$ of $G$ with $|V(H)|\ge \frac 25|V(G)|$. Hence $v\in V(L')$, choose a vertex  $z\in N^2_G(u,v)\cup\{u\}$ which is adjacent to $v$, then $\{x,z\}\cup V(L')$ induces an odd subgraph $H$ of $G$ with $|V(H)|\ge \frac 25|V(G)|$.
In all cases we get contradictions and so the claim follows.

\vspace{5pt}
{ We shall show that certain special cases cannot occur in the minimal counterexample $G$, which would be helpful to eliminate exception values in later discussion.

\begin{claim}\label{CLAIM: l6c9}
If $uw\notin E(G)$ then none of the following occurs in the minimal counterexample $G$.\\
(a) $t_1+t_2=2$ and $t_3=1$;\\
(b) $t_1+t_2=t_3=p, \ p=2$ or $4$.
\end{claim}
}

For $t_1+t_2=2$ and $t_3=1$,  reset $V_0=(N_G(u)\cup\{u,x\})\setminus\{w, v\}$ and $G'=G-V_0$, then $G'$ has no isolated vertex and so, by the minimality of $G$, $G'$ has an odd induced subgraph $L'$ with $|V(L')|\ge \frac 25|V(G')|$. If $v\in V(L')$, choose a vertex  $z\in N^2_G(u,v)\cup\{u\}$ which is adjacent to $v$, note that $uw\notin E(G)$, then $\{x,z\}\cup V(L')$ induces an odd subgraph $H$ of $G$ with $|V(H)|\ge \frac 25|V(G)|$, a contradiction. Hence $v\notin V(L')$, choose a vertex $z$ from $N^1_G(u)\cup N^2_G(u,v)$, then $\{u, z\}\cup V(L')$ induces an odd subgraph $H$ of $G$ with $|V(H)|\ge \frac 25|V(G)|$, a contradiction.

For $t_1+t_2=t_3=p$, $p=2,4$, reset $V_0=(N_G(u)\cup\{u\})\setminus\{v,w\}$ and $G'=G-V_0$, then $G'$ has no isolated vertex and so, by the minimality of $G$, $G'$ has an odd induced subgraph $L'$ with $|V(L')|\ge\frac 25|V(G')|$. If $w\in V(L')$, note that $|N^2_G(u,w)|=t_3=p$ is even, $N^2_G(u,w)\cup V(L')$ induces an odd subgraph $H$ of $G$ with $|V(H)|\ge \frac 25|V(G)|$, a contradiction. So suppose $w\notin V(L')$.
If $uv\notin E(G)$ or $v\notin V(L')$, choose a subset $S$ of $N^2_G(u,w)$ so that $|S|=p-1$, then $S\cup\{u\}\cup V(L')$ induces an odd subgraph $H$ of $G$ with $|V(H)|\ge\frac 25|V(G)|$, a contradiction. Hence $uv\in E(G)$ and $v\in V(L')$. If $x\in V(L')$, then $N^2_G(u,w)\cup\{u\}\cup (V(L')\setminus\{x\})$ induces an odd subgraph $H$ with $|V(H)|=p+1+|V(L')|-1\ge \frac 25 |V(G)|$, a contradiction. Hence $x\notin V(L')$. Then $N^2_G(u,w)\cup\{u\}\cup V(L')\cup\{x\}$ induces an odd subgraph $H$ of $G$ with $|V(H)|\ge \frac 25|V(G)|$, a contradiction. This proves the claim.

\vspace{5pt}
By Claim~\ref{CLAIM: l6c5}, we may assume, without loss of generality,  $w\in V(H')$. Hence, by Claim~\ref{CLAIM: l6c6}, $t_3\le t_1+t_2$. Now we divide the discussion into two subcases below.

\vspace{5pt}

\noindent{\bf Subcase 2.1}.  $v\notin V(H')$.


\vspace{5pt}
 Let $S$ be a maximum subset of $N_G^1(u)\cup N_G^2(u,v)$ such that $s=|S|$ is odd if $uw\notin E(G)$ or $s=|S|$ is even if $uw\in E(G)$. Set $S_0=S\cup\{u\}$ if $uw\notin E(G)$  or $S_0=S\cup\{u,y\}$ if $uw\in E(G)$. Note that $s=t_1+t_2$ or $t_1+t_2-1$ depending on the parity of $t_1+t_2$ and $|S_0|=s+1$ or $s+2$ depending on $uw\notin E(G)$ or $uw\in E(G)$. Notice that $t_3\le t_1+t_2$, we have $|S_0|/|V_0|=|S_0|/(t_1+t_2+t_3+3)\geq 2/5$ unless $uw\notin E(G)$ and $t_1+t_2=2$, $t_3=1$, or $t_1+t_2=t_3=2$, or $t_1+t_2=t_3=4$. Therefore $S_0\cup V(H')$ induces an odd subgraph $H$ of $G$ with $|V(H)|\ge \frac 25|V(G)|$ but three exceptions. However, none of the exceptions occur in $G$ by Claim \ref{CLAIM: l6c9}. This yields a contradiction and verifies Subcase 2.1.

 \vspace{5pt}

{ \noindent{\bf Subcase 2.2}.  $v\in V(H')$.

By Claim~\ref{CLAIM: l6c6}, we have $t_3\le t_1+t_2$ and $t_2\le t_1+t_3$. Furthermore, we have the following claim.
\begin{claim}\label{CLAIM: l6c8}
We have $t_2+t_3\leq t_1$.
\end{claim}
Suppose to the contrary that $t_2+t_3\geq t_1+1$.
Let $S_v$ be a maximum subset of $ N_G^2(u,v)\cup\{x\}$ such that $|S_v|$ is even, let $S_w$ be a maximum subset of $ N_G^2(u,w)\cup\{y\}$ such that $|S_w|$ is even, and set $S_0=S_u\cup S_v$. Then $S_0\cup V(H')$ induces an odd subgraph of $G$. By checking the parity of $t_2$ and $t_3$ with certain calculation,
we have $|S_0|/|V_0|\ge \frac{2}{5}$ unless $t_1=1$, $t_2+t_3=2$ and $t_i$, $i=2,3$, is even. But this exception cannot occur because $t_3\le t_1+t_2$ and $t_2\le t_1+t_3$, a contradiction. Hence the claim holds.

Now, we choose a set $S_0$ according to the following rules:

{\em (i) If $uv\in E(G), uw\in E(G)$, let $S_0=S_u\cup \{u, x, y\}$, where $S_u$ is the maximum subset of $N_G^1(u)$ with  size odd;

     (ii) If $uv\in E(G), uw\notin E(G)$, let $S_0=S_u\cup \{u, x\}$, where $S_u$ is the maximum subset of $N_G^1(u)$ with  size even;

     (iii) If $uv\notin E(G), uw\in E(G)$, let $S_0=S_u\cup \{u, y\}$, where $S_u$ is the maximum subset of $N_G^1(u)$ with  size even;

     (iv) If $uv\notin E(G), uw\notin E(G)$, let $S_0=S_u\cup \{u\}$, where $S_u$ is the maximum subset of $N_G^1(u)$ with  size odd.
}

Then $S_0\cup V(H')$ induces an odd subgraph of $G$ by definition. It remains to compute $|S_0|/|V_0|$.

If $t_1$ is odd, we have $|S_0|/|V_0|\ge(t_1+1)/(t_1+t_2+t_3+3)\geq 2/5$ by Claim \ref{CLAIM: l6c8} in each of the cases (i)-(iv). If $t_1$ is even, it follows from Claim \ref{CLAIM: l6c8} that $|S_0|/|V_0|\ge(t_1+2)/(t_1+t_2+t_3+3)\geq 2/5$ in each of the cases (i)-(iii), and in the case (iv), $|S_0|/|V_0|=t_1/(t_1+t_2+t_3+3)\geq 2/5$ unless $t_1=2$, $t_2+t_3=2$ or $t_1=4$, $t_2+t_3=4$.
 Therefore, $S_0\cup V(H')$ induces an odd subgraph $H$ of $G$ with $|V(H)|\ge \frac 25|V(G)|$ unless $t_1=t_2+t_3=2$ or $t_1=t_2+t_3=4$.

For $t_1=t_2+t_3=p$, $p=2,4$, reset $V_0=N_G(u)\cup\{u\}$ and $G'=G-V_0$, then $G'$ has no isolated vertex and so, by the minimality of $G$, $G'$ has an odd induced subgraph $L'$ with $|V(L')|\ge\frac 25|V(G')|$. Choose a subset $S$ of $N^1_G(u)$ so that $|S|=p-1$, then $S\cup\{u\}\cup V(L')$ induces an odd subgraph $H$ of $G$ with $|V(H)|\ge\frac 25|V(G)|$, a contradiction.


}

The proof of the lemma is completed.
\end{proof}

The following three structural properties of the minimum counterexample $G$ are direct consequence of Lemmas~\ref{LEM: P1} and~\ref{LEM: P2}.

\begin{cor}\label{COR: c1}
Let $V_1$ be the set of all 1-vertices in $G$ and let $P=N_G(V_1)$. Suppose $G_1=G-V_1$, then $d_{G_1}(x)\geq 3$ for any $x\in P$.
\end{cor}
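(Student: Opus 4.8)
The claim is that after deleting all degree-one vertices from the minimal counterexample $G$, every vertex that had a pendant neighbour still has degree at least $3$ in the remaining graph $G_1$. The plan is to argue by contradiction: suppose some $x\in P=N_G(V_1)$ has $d_{G_1}(x)\le 2$, and derive a violation of either Lemma~\ref{LEM: P1} or Lemma~\ref{LEM: P2}. The first observation to record is that $x$ has at least one pendant neighbour in $G$, so $N_G^1(x)\ne\emptyset$, i.e. $t_1\ge 1$ in the notation of those lemmas. The degree of $x$ in $G_1$ counts precisely the neighbours of $x$ of degree at least $2$ in $G$, so $d_{G_1}(x)=d_G(x)-|N_G^1(x)|$; the hypothesis $d_{G_1}(x)\le 2$ then bounds how many ``genuine'' (degree $\ge 2$) neighbours $x$ can have.

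Next I would relate this to the quantity $D_G(x)=|S_G(x)|$, where $S_G(x)=\{y\in N_G(x): d_G(y)\ge 3\text{ or }N_G^2(x,y)\neq\emptyset\}$. Every vertex of $S_G(x)$ is a neighbour of $x$ of degree at least $2$ in $G$ (a degree-one vertex is neither a $\ge 3$-vertex nor can it lie in a common $2$-neighbourhood), hence $S_G(x)\subseteq N_{G_1}(x)$ and therefore $D_G(x)\le d_{G_1}(x)\le 2$. Now split into cases on the value of $D_G(x)$. If $D_G(x)=0$, then $x$ together with $N_G^1(x)$ (and possibly some degree-$2$ neighbours forming no triangle-like structure) forms a tiny component, which one checks directly cannot occur in a minimal counterexample (a star, or a path/small graph, all satisfy $f\ge\frac25|V|$); alternatively this degenerate case can be folded into the Lemma~\ref{LEM: P1} argument. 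If $D_G(x)=1$ with $S_G(x)=\{v\}$, then Lemma~\ref{LEM: P1} applies to the vertex $x$ and forces $N_G^1(x)\cup N_G^2(x,v)=\emptyset$, contradicting $N_G^1(x)\ne\emptyset$. If $D_G(x)=2$ with $S_G(x)=\{v,w\}$, then Lemma~\ref{LEM: P2} applies to $x$ and forces $N_G^1(x)\cup N_G^2(x,v)\cup N_G^2(x,w)=\emptyset$, again contradicting $N_G^1(x)\ne\emptyset$. In every case we reach a contradiction, so $d_{G_1}(x)\ge 3$.

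The only real subtlety — and the step I would be most careful about — is the bookkeeping identity $d_{G_1}(x)=d_G(x)-|N_G^1(x)|$ and the inclusion $S_G(x)\subseteq N_{G_1}(x)$: one must make sure that removing the degree-one vertices does not drop the degree of $x$ below what is needed, i.e. that none of the vertices counted in $S_G(x)$ is itself a degree-one vertex (true by definition of $S_G$) and that $G_1$ is the right graph to which the conclusion refers. There is also a minor point that $G_1$ could in principle create new isolated vertices, but Corollary~\ref{COR: c1} does not assert anything about that; it only controls vertices of $P$, and for those the three cases above are exhaustive since $D_G(x)\le d_{G_1}(x)\le 2$. So the proof is essentially a one-line deduction from the two lemmas once the elementary degree count is in place; no calculation beyond that is required.
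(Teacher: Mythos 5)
Your proof follows the paper's argument almost verbatim: assume $d_{G_1}(x)\le 2$ for some $x\in P$, dispose of the degenerate case, deduce $0<D_G(x)\le 2$ while $N_G^1(x)\neq\emptyset$, and contradict Lemma~\ref{LEM: P1} or Lemma~\ref{LEM: P2}. One caveat on the step you yourself flag as delicate: your justification of $D_G(x)\le d_{G_1}(x)$ via the inclusion $S_G(x)\subseteq N_{G_1}(x)$ reads $S_G(x)$ as a subset of $N_G(x)$, whereas the paper's operative definition (see Case~2 of the proof of Lemma~\ref{LEM: P1}, where $uv\notin E(G)$ is explicitly allowed, and the source of Lemma~\ref{LEM: K_4-free}) admits vertices $v\notin N_G(x)$ that merely share a degree-$2$ neighbour with $x$, so the inclusion is false as stated. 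The inequality you need survives anyway: each degree-$\ge 3$ neighbour of $x$ contributes itself to $S_G(x)$, and each degree-$2$ neighbour of $x$ contributes at most its unique other neighbour, so $|S_G(x)|\le d_{G_1}(x)$. Under this reading, $D_G(x)=0$ together with $x\in P$ forces every neighbour of $x$ to be pendant, i.e.\ the component of $x$ is a star, which is exactly how the paper settles that case; your vaguer ``tiny component'' description would not hold under your own reading of $S_G$, since a degree-$2$ neighbour of $x$ could lead into an arbitrarily large component.
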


\begin{proof}[Proof]
Suppose to the contrary that there is a vertex $x\in P$ with $d_{G_1}(x)\le 2$. If $d_{G_1}(x)=0$ then $G$ is isomorphic to a star, which cannot be a counterexample. Hence $0<d_{G_1}(x)\le 2$. This implies that $0<D_G(x)\le 2$. But $|N_G^1(x)|\geq 1$, this is a contradiction to Lemmas~\ref{LEM: P1} or~\ref{LEM: P2}.
\end{proof}

\begin{cor}\label{COR: c2}
$G$ has no adjacent $2$-vertices.
\end{cor}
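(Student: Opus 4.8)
The plan is to reach a contradiction directly from Lemmas~\ref{LEM: P1} and~\ref{LEM: P2}, applied to one of the two adjacent $2$-vertices. Suppose $u$ and $v$ are adjacent $2$-vertices of $G$; write $N_G(u)=\{v,w\}$ and let $y$ be the neighbour of $v$ other than $u$. Since $d_G(v)=2$, the vertex $y$ is well defined and $y\neq u$.

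The key observation is that $v$ is a degree-$2$ vertex adjacent to both $u$ and $y$, so $v\in N_G^2(u)\cap N_G^2(y)=N_G^2(u,y)$; in particular $N_G^2(u,y)\neq\emptyset$, and hence $y\in S_G(u)$ by the definition of $S_G$, so that $D_G(u)\geq 1$. I would then bound $D_G(u)$ from above. By definition $S_G(u)$ is the union of $\{x\in N_G(u):d_G(x)\geq 3\}$, which is contained in $\{w\}$ because $d_G(v)=2$, with the set of vertices sharing a degree-$2$ neighbour with $u$; the degree-$2$ neighbours of $u$ lie in $\{v,w\}$, and each such neighbour $z$ contributes only the single vertex of $N_G(z)\setminus\{u\}$. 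Splitting according to whether $d_G(w)$ equals $1$, equals $2$, or is at least $3$ then yields $D_G(u)=|S_G(u)|\leq 2$ in every case. Thus $D_G(u)\in\{1,2\}$.

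It remains to apply the appropriate lemma. If $D_G(u)=1$ then $S_G(u)=\{y\}$, and Lemma~\ref{LEM: P1} gives $N_G^1(u)\cup N_G^2(u,y)=\emptyset$; if $D_G(u)=2$ then $y$ is one of the two vertices of $S_G(u)$, and Lemma~\ref{LEM: P2} shows the union of $N_G^1(u)$ with the two sets $N_G^2(u,\cdot)$ to be empty, in particular $N_G^2(u,y)=\emptyset$. In either case this contradicts $v\in N_G^2(u,y)$ found above, so $G$ has no adjacent $2$-vertices.

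The delicate point --- the only place that is not a one-line deduction --- is the bookkeeping behind $D_G(u)\leq 2$, because when $w$ has degree $1$ or $2$ one must track exactly which vertices enter $S_G(u)$: if $d_G(w)=2$ then $u$ and $w$ are themselves adjacent $2$-vertices, and if $w=y$ the local picture is a triangle of $2$-vertices. These are finitely many routine subcases and none of them needs a new idea; moreover $d_G(w)=1$ can be eliminated immediately with Corollary~\ref{COR: c1}, since then $u\in N_G(V_1)$ would force $d_{G-V_1}(u)\geq 3$ whereas $v$ is the only non-pendant neighbour of $u$. In all subcases the contradiction comes from the same fact: the degree-$2$ neighbour $v$ of $u$ is a common degree-$2$ neighbour of $u$ and a vertex of $S_G(u)$.
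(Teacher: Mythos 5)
Your proof is correct and follows essentially the same route as the paper: the paper also observes that $D_G(u)\le 2$, sets $v_1=N_G(v)\setminus\{u\}$, and derives the contradiction from $v\in N_G^2(u,v_1)$ via Lemma~\ref{LEM: P1} or~\ref{LEM: P2}. Your extra bookkeeping on why $D_G(u)\le 2$ (and the triangle/degree-of-$w$ subcases) just fills in details the paper leaves implicit.
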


\begin{proof}[Proof]
Suppose to the contrary that $G$ has two adjacent $2$-vertices $u,v$. Then $D_G(u)\le 2$. Let $v_1=N_G(v)\setminus \{u\}$.  Then $v\in N^2_G(u, v_1)$, which is a contradiction to Lemmas~\ref{LEM: P1} or~\ref{LEM: P2}.


\end{proof}

\begin{cor}\label{COR: c3}
$G$ has no vertex $u$ with $d_G(u)\ge 3$ so that $D_G(u)\le 2$.
\end{cor}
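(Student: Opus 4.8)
The plan is to argue by contradiction, leveraging Lemmas~\ref{LEM: P1} and~\ref{LEM: P2}. Suppose $G$ contains a vertex $u$ with $d_G(u)\ge 3$ and $D_G(u)\le 2$. First I would dispose of the degenerate case $D_G(u)=0$: a neighbor of $u$ of degree at least $3$ would lie in $S_G(u)$, and a $2$-vertex neighbor $z$ of $u$ (with second neighbor $x\neq u$) would give $z\in N_G^2(u,x)$ and hence $x\in S_G(u)$; either way $D_G(u)\ge 1$. So in this case every neighbor of $u$ is a $1$-vertex, forcing $G$ to be a star, which is not a counterexample (as already noted in the proof of Corollary~\ref{COR: c1}). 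Hence we may assume $D_G(u)\in\{1,2\}$.

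Next I would apply Lemma~\ref{LEM: P1} when $D_G(u)=1$, and Lemma~\ref{LEM: P2} when $D_G(u)=2$, to the vertex $u$ itself. In either case the conclusion states exactly that $N_G^1(u)=\emptyset$ and that $N_G^2(u,x)=\emptyset$ for every $x\in S_G(u)$. The key step is then to rule out any $2$-vertex neighbor of $u$: if $z\in N_G^2(u)$ and $x$ denotes the neighbor of $z$ other than $u$, then from $d_G(z)=2$ we get $z\in N_G^2(u)\cap N_G^2(x)=N_G^2(u,x)$, so $x\in S_G(u)$ and therefore $N_G^2(u,x)\neq\emptyset$, contradicting the previous sentence. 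Combining this with $N_G^1(u)=\emptyset$, every neighbor of $u$ has degree at least $3$, so that $N_G(u)\subseteq S_G(u)$ and $D_G(u)=|S_G(u)|\ge d_G(u)\ge 3$, contradicting $D_G(u)\le 2$.

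I do not expect a genuine obstacle here; as the paper indicates, the statement is a direct consequence of Lemmas~\ref{LEM: P1} and~\ref{LEM: P2}. The only point that needs a little care is correctly unwinding the definition of $S_G(u)$ --- in particular noticing that a $2$-vertex neighbor of $u$ forces its other endpoint into $S_G(u)$ --- after which the vanishing conclusions of those lemmas push all of $N_G(u)$ into $S_G(u)$ and the degree count closes the argument.
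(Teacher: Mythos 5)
Your proof is correct and follows essentially the same route as the paper: invoke Lemmas~\ref{LEM: P1} and~\ref{LEM: P2} to rule out neighbors of $u$ of degree at most $2$ (handling the star case when $D_G(u)=0$), and conclude $D_G(u)\ge d_G(u)\ge 3$. Your write-up is just a more carefully unwound version of the paper's two-line argument, in particular making explicit why a $2$-vertex neighbor of $u$ would contradict the lemmas.
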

\begin{proof}[Proof]
Suppose to the contrary that $G$ has  a vertex $u$ with $d_G(u)\ge 3$ and $D_G(u)\le 2$. By Lemmas~\ref{LEM: P1} and~\ref{LEM: P2}, $u$ has no neighbor of degree at most 2 since $G$ cannot be isomorphic to a star. This implies $D_G(u)\ge d_G(u)\ge 3$, a contradiction.
\end{proof}

\section{Proof of Theorem~\ref{THM: Main}}
Before giving the proof, we need some definition and structural properties of graphs with treewidth at most 2. A graph $G$ contains a graph $H$ as a {\it minor} if $H$ can be obtained from a subgraph of $G$ by contracting edges, and $G$ is called {\it $H$-minor} free if $G$ does not have $H$ as a minor.
It is well known that
\begin{prop}\label{PROP: p1}[Proposition 12.4.2,~\cite{Diestel-GT00}]
A graph has treewidth at most 2 if and only if it is $K_4$-minor free.
\end{prop}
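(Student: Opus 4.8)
The plan is to prove the two implications separately; the left-to-right direction is immediate, and the converse goes by induction on $n=|V(G)|$.

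For the forward direction, recall that treewidth does not increase under taking minors, and that $\mathrm{tw}(K_4)=3$: in any tree decomposition the nodes whose bag contains a fixed vertex form a subtree, for adjacent vertices these subtrees meet, and so by the Helly property of subtrees of a tree all four subtrees associated with $V(K_4)$ share a common node, whose bag then has size $4$. Hence a $K_4$ minor in $G$ would force $\mathrm{tw}(G)\ge 3$, contrary to $\mathrm{tw}(G)\le 2$.

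For the converse, the only substantial input is the following \emph{Key fact}: every $K_4$-minor-free graph with at least one edge has a vertex of degree at most $2$ (equivalently, the class of $K_4$-minor-free graphs is $2$-degenerate). Granting it, I argue by induction on $n$, with $n\le 3$ trivial. Let $v$ be a vertex of $G$ with $d_G(v)\le 2$. If $d_G(v)\le 1$, then $G-v$ is $K_4$-minor-free, so by induction it has a tree decomposition of width at most $2$; attaching a new bag $\{v\}$ (if $d_G(v)=0$), or a new bag $\{v,u\}$ joined to any bag containing the unique neighbour $u$ of $v$ (if $d_G(v)=1$), yields a valid tree decomposition of $G$ of width at most $2$. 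If $d_G(v)=2$ with neighbours $u,w$, set $G'=(G-v)+uw$, adding the edge $uw$ if it is not already present. Then $G'$ is still $K_4$-minor-free: a $K_4$ minor of $G'$ not using the edge $uw$ already occurs in $G-v\subseteq G$, while one using $uw$ can be transferred to $G$ by putting $v$ into whichever branch set contains $u$ — this set stays connected because $uv\in E(G)$, and it becomes adjacent to the branch set of $w$ because $vw\in E(G)$. By induction $G'$ has a tree decomposition of width at most $2$, and since $uw\in E(G')$ some bag $B$ contains $\{u,w\}$; I attach a new bag $\{u,v,w\}$ adjacent to $B$. The three axioms are checked directly: the new edges $uv$ and $vw$ lie in the new bag, $v$ occupies a single bag, and the bags containing $u$ (respectively $w$) still form a subtree since the new bag hangs off $B$, which already contained both $u$ and $w$. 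The width remains $2$, so $\mathrm{tw}(G)\le 2$, closing the induction.

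It remains only to establish the Key fact, and I expect this — rather than the tree-decomposition surgery, which is routine — to be the heart of the argument. The Key fact is equivalent to the classical statement that every graph with minimum degree at least $3$ contains a $K_4$ minor: if that failed, a witnessing graph would itself be a $K_4$-minor-free graph with an edge and no vertex of degree $\le 2$, and conversely any such graph has $\delta\ge 3$. Since $K_4$ is cubic, having a $K_4$ minor is the same as containing a subdivision of $K_4$, and one proves the existence of such a subdivision in a graph $G$ with $\delta(G)\ge 3$ in the standard way — taking a longest path $P$, noting that all neighbours of each endpoint of $P$ lie on $P$, and using two such neighbours of one endpoint together with $P$ and one further incidence to exhibit four branch vertices joined by internally disjoint paths. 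I would quote this (it is Dirac's theorem, and is implicit in the reference already cited) rather than reprove it in full.
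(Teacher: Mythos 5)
The paper does not prove this proposition at all: it is quoted, with a reference, as Proposition~12.4.2 of Diestel's book, where the proof runs through the structure theorem for edge-maximal $K_4$-minor-free graphs (they are exactly the graphs built recursively from triangles by pasting along $K_2$'s, i.e.\ the $2$-trees) together with the lemma that pasting two graphs along a complete subgraph does not increase treewidth. Your argument is a genuinely different and more self-contained route: in place of the global structure theorem you use only the $2$-degeneracy of the class, and you build the tree decomposition directly by an induction that deletes a vertex $v$ of degree at most $2$ and adds the edge $uw$ between its two neighbours. The forward direction via minor-monotonicity and the Helly property is exactly the standard computation showing $\mathrm{tw}(K_4)=3$, and the surgery attaching a bag $\{u,v,w\}$ to a bag containing $\{u,w\}$ is correct and verifies all three axioms. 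What your route buys is independence from the $2$-tree characterization; what it costs is that all of the real content is pushed into your Key fact.

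Two caveats. First, in showing that $G'=(G-v)+uw$ is $K_4$-minor-free you only treat the case in which $u$ and $w$ lie in different branch sets of a putative $K_4$ minor of $G'$; if the edge $uw$ is used inside a single branch set $B$, you should instead add $v$ to $B$ and note that $B\cup\{v\}$ is connected in $G$ because any walk through the edge $uw$ can be rerouted as $u,v,w$. This is trivial but needs saying. Second, and more seriously, your parenthetical sketch of the Key fact is not a proof: a longest path $P$ with endpoint $x_0$ and two further neighbours $x_i,x_j$ of $x_0$ on $P$ yields only a cycle with a chord --- a subgraph with just two vertices of degree $3$ --- whereas a subdivision of $K_4$ needs four branch vertices joined by six internally disjoint paths. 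Genuine proofs that $\delta(G)\ge 3$ forces a $TK_4$ require more (for instance, induction on a minimal subgraph of minimum degree $3$, or the edge-maximal structure theorem itself). Since you explicitly cite the classical result rather than prove it, the overall argument stands, but the sketch as written would mislead a reader into thinking the Key fact is a two-line observation.
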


For $K_4$-minor free graphs, Lih, Wang, and Zhu~(\cite{LWZ-DM03}) gave a powerful structural property of them.

\begin{lem}\label{LEM: K_4-free}[Lemma 2,~\cite{LWZ-DM03}]
 If $G$ is a $K_4$-minor free graph, then one of the following holds:

 (a) $\delta(G)\leq 1$;

 (b) there exist two adjacent $2$-vertices;

 (c) there exists a vertex $u$ with $d_G(u)\geq 3$ such that $D_G(u)\leq 2$.

\end{lem}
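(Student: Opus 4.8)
The plan is to prove the contrapositive-style statement: assume (a) and (b) both fail, i.e.\ $\delta(G)\ge 2$ and $G$ has no two adjacent $2$-vertices, and deduce (c). The first observation I would record is that, since (b) fails, both neighbours of every $2$-vertex of $G$ have degree at least $3$. Then I would pass to the multigraph $G'$ obtained from $G$ by \emph{suppressing} every $2$-vertex, that is, deleting each $2$-vertex $z$ and adding an edge between its two neighbours (keeping multiplicities). Equivalently $G'$ is obtained from $G$ by contracting, for each $2$-vertex $z$, one of the two edges incident with $z$; hence $G'$ is a minor of $G$, so $G'$ is $K_4$-minor free, and so is its underlying simple graph $\hat G$. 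Note that $V(G')$ is exactly the set of vertices of $G$ of degree at least $3$, and that suppression does not change the degree of such a vertex, so $d_{G'}(u)=d_G(u)\ge 3$ for every $u\in V(G')$.

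The heart of the argument is the identity: for $u\in V(G')$, the set $S_G(u)$ is precisely the neighbourhood of $u$ in $G'$ viewed as a \emph{set} of vertices (ignoring multiplicities), so that $D_G(u)=d_{\hat G}(u)$. Indeed, an edge of $G'$ incident with $u$ is either an edge $ux$ of $G$ with $d_G(x)\ge 3$, in which case $x\in S_G(u)$, or an edge created by suppressing a $2$-vertex $z$ with $z\in N_G(u)\cap N_G(x)$, i.e.\ $z\in N_G^2(u,x)$, in which case again $x\in S_G(u)$ and $d_G(x)\ge 3$ by the first observation; conversely every element of $S_G(u)$ is realized in one of these two ways. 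So it remains only to produce a vertex $u\in V(G')$ with $d_{\hat G}(u)\le 2$.

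For this I would first check that $\hat G$ (equivalently $G'$) is non-empty: if $G$ had no vertex of degree at least $3$ then, since $\delta(G)\ge 2$, $G$ would be a disjoint union of cycles, each of which contains two adjacent $2$-vertices, contradicting the failure of (b). Now $\hat G$ is a non-empty simple $K_4$-minor free graph, hence has treewidth at most $2$ by Proposition~\ref{PROP: p1}, and any non-empty graph of treewidth at most $2$ has a vertex of degree at most $2$: take a tree decomposition with all bags of size at most $3$, repeatedly delete a leaf bag contained in its neighbour until every leaf bag has a ``private'' vertex (or the tree is a single bag of size $\le 3$), and observe that such a private vertex of a leaf bag has all of its neighbours inside that bag, hence degree at most $2$. (Alternatively, one may invoke directly that $K_4$-minor free graphs are $2$-degenerate.) Applying this to $\hat G$ gives $u$ with $D_G(u)=d_{\hat G}(u)\le 2$ and $d_G(u)=d_{G'}(u)\ge 3$, which is exactly (c).

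The step I expect to require the most care is the exact matching $D_G(u)=d_{\hat G}(u)$: it depends on the precise reading of the definition of $S_G(u)$ and, crucially, on the failure of (b) to ensure that no suppressed $2$-vertex has a $2$-vertex as a neighbour, so that every ``new'' edge of $G'$ points to a genuine degree-$\ge 3$ vertex. The remaining ingredients—that suppression is a sequence of edge contractions (hence preserves $K_4$-minor freeness) and that treewidth-$2$ graphs are $2$-degenerate—are standard, and the only genuine case analysis is the degenerate situation where $G$ is a union of cycles, handled above.
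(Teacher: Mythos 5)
The paper does not actually prove this lemma --- it is imported verbatim from Lih, Wang and Zhu \cite{LWZ-DM03} --- so there is no in-paper argument to measure yours against; taken on its own, your proof is correct and makes the paper self-contained on this point. The suppression argument is the right mechanism: once $\delta(G)\ge 2$ and no two $2$-vertices are adjacent, the $2$-vertices form an independent set all of whose neighbours have degree at least $3$, so suppressing them is well defined and order-independent, yields a minor of $G$ (hence again $K_4$-minor free), and gives exactly the identity $S_G(u)=N_{\hat G}(u)$ for every vertex $u$ of degree at least $3$; you correctly single out this identity, and its dependence on the failure of (b), as the crux. Feeding it into the classical fact that a non-null simple $K_4$-minor-free graph has a vertex of degree at most $2$ (your leaf-bag argument is fine; Dirac's theorem that $\delta\ge 3$ forces a $K_4$-subdivision, or the 2-degeneracy of graphs of treewidth at most $2$ via Proposition~\ref{PROP: p1}, would serve equally well) then delivers (c). Two harmless points worth a sentence each if you write this up: the definition of $S_G(u)$ as literally stated would admit $x=u$ whenever $u$ has a $2$-vertex neighbour, so one should read $x\ne u$ there, as you implicitly do when matching $S_G(u)$ with a neighbourhood; and the statement should be understood for non-null $G$ (for the graph with no vertices none of (a)--(c) holds), which costs nothing in the application since the graph $G_1$ to which the paper applies the lemma is non-null. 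Neither affects the validity of your argument.
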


\begin{proof}[Proof of Theorem~\ref{THM: Main}]
Let $G$ be a minimum counterexample with respect to the order of $G$. By the minimality of $G$, $G$ must be connected.
Let $V_1$ be the set of all 1-vertices in $G$ and $P=N_G(V_1)$. Let $G_1=G-V_1$. By Corollaries~\ref{COR: c1} and~\ref{COR: c2}, $\delta(G_1)\geq 2$ and $G_1$ has no adjacent $2$-vertices. Clearly, $tw(G_1)\le 2$ and hence $G_1$ is $K_4$-minor free. By Lemma~\ref{LEM: K_4-free},  $G_1$ has a vertex $u$ with $d_{G_1}(u)\geq 3$ and  $D_{G_1}(u)\leq 2$. Clearly, $d_G(u)=d_{G_1}(u)+|N^1_G(u)|$ and the adding of the vertices of $N^1_G(u)$ to $G_1$ does not increases the value of $D_{G_1}(u)$. So $D_G(u)=D_{G_1}(u)\leq 2$, this is a contradiction to Corollary~\ref{COR: c3}.
The proof of Theorem~\ref{THM: Main} is completed.
\end{proof}

\section{Concluding remarks}
Let $$\mathcal{G}_k=\{G\colon\, tw(G)\le k \mbox{ and $G$ contains no isolated vertex}\},$$
and $c_k=\min_{G\in \mathcal{G}_k}\frac{f(G)}{|V(G)|}$. { Since each graph in $\mathcal{G}_k$ has chromatic number at most $k+1$, Scott's result~\cite{Scott-CPC92} implies $c_k\ge \frac{1}{2k+2}$. The follow graphs $H_k$ in Figure~\ref{FIG: Hk} gives an upper bound $c_k\le \frac{2}{k+3}$ for $k=1,2,3,4$. Note that the graph $H_4$ is found by Caro~\cite{Caro-DM1994}, which is the smallest known ratio of $\frac{f(G)}{|V(G)|}$ for all graphs $G$.}
As we have known, {Theorem~\ref{THM: RSMain} of Radcliffe and Scott~\cite{RS-DM95} and the upper bound of $c_k$ implies $c_1=1/2$}, and in this paper, we show that $c_2=2/5$ (Theorem~\ref{THM: Main}).
As a far more step, we want ask the question: what is the exact value $c_k$ for  graphs in $\mathcal{G}_k$. { It is plausible that $c_3=\frac{1}{3}$ and $c_4=\frac{2}{7}$.}

  \begin{figure}[ht]

\setlength{\unitlength}{0.08cm}

\begin{center}

\begin{picture}(110,35)

\put(-3, 5){\circle*{1.6}}\put(9, 5){\circle*{1.6}}\put(-3, 15){\circle*{1.6}}\put(9, 15){\circle*{1.6}}
\put(0, -4){\small $H_1$}
\qbezier(-3, 5)(-3, 5)(9, 5)\qbezier(-3, 5)(-3, 5)(-3, 15)\qbezier(9, 5)(9, 15)(9, 15)

\put(46.4127, 19.63525){\circle*{1.6}}\put(23.5873, 19.63525){\circle*{1.6}}
\put(35, 27){\circle*{1.6}}\put(42.0534, 5.2918){\circle*{1.6}}\put(27.9466, 5.2918){\circle*{1.6}}
\put(32, -4){\small $H_2$}

\qbezier(35, 27)(46.4127, 19.63525)(46.4127, 19.63525)
\qbezier(42.0534, 5.2918)(42.0534, 5.2918)(46.4127, 19.63525)
\qbezier(27.9466, 5.2918)(27.9466, 5.2918)(42.0534, 5.2918)
\qbezier(23.5873, 19.63525)(23.5873, 19.63525)(27.9466, 5.2918)
\qbezier(35,27)(35,27)(23.5873, 19.63525)

\put(63.5, 3.75){\circle*{1.6}}
\put(76.5, 3.75){\circle*{1.6}}
\put(83, 15){\circle*{1.6}}
\put(76.5, 26.25){\circle*{1.6}}
\put(63.5, 26.25){\circle*{1.6}}
\put(57, 15){\circle*{1.6}}

\put(67, -4){\small $H_3$}

\qbezier(63.5, 3.75)(63.5, 3.75)(76.5, 3.75)
\qbezier(63.5, 3.75)(63.5, 3.75)(57, 15)
\qbezier(57, 15)(57, 15)(63.5, 26.25)
\qbezier(63.5, 26.25)(63.5, 26.25)(76.5, 26.25)
\qbezier(76.5, 26.25)(76.5, 26.25)(83, 15)
\qbezier(83, 15)(83, 15)(76.5, 3.75)

\qbezier(76.5, 26.25)(76.5, 26.25)(76.5, 3.75)
\qbezier(57, 15)(57, 15)(76.5, 3.75)

\qbezier(57, 15)(57, 15)(76.5, 26.25)
\qbezier(83, 15)(83, 15)(63.5, 3.75)
\qbezier(83, 15)(83, 15)(63.5, 26.25)

\put(114.3820, 22.4819){\circle*{1.6}}
\put(116.6991, 12.32975){\circle*{1.6}}
\put(110.2066, 4.1884){\circle*{1.6}}
\put(99.7934, 4.1884){\circle*{1.6}}
\put(93.3009, 12.32975){\circle*{1.6}}
\put(95.6180, 22.4819){\circle*{1.6}}
\put(105, 27){\circle*{1.6}}
\put(102, -4){\small $H_4$}

\qbezier(114.3820, 22.4819)(114.3820, 22.4819)(116.6991, 12.32975)
\qbezier(114.3820, 22.4819)(114.3820, 22.4819)(110.2066, 4.1884)
\qbezier(114.3820, 22.4819)(114.3820, 22.4819)(95.6180, 22.4819)

\qbezier(110.2066, 4.1884)(110.2066, 4.1884)(116.6991, 12.32975)
\qbezier(93.3009, 12.32975)(110.2066, 4.1884)(110.2066, 4.1884)
\qbezier(105, 27)(116.6991, 12.32975)(116.6991, 12.32975)
\qbezier(105, 27)(93.3009, 12.32975)(93.3009, 12.32975)
\qbezier(95.6180, 22.4819)(99.7934, 4.1884)(99.7934, 4.1884)
\qbezier(116.6991, 12.32975)(99.7934, 4.1884)(99.7934, 4.1884)

\qbezier(110.2066, 4.1884)(110.2066, 4.1884)(99.7934, 4.1884)

\qbezier(99.7934, 4.1884)(93.3009, 12.32975)(93.3009, 12.32975)

\qbezier(95.6180, 22.4819)(93.3009, 12.32975)(93.3009, 12.32975)

\qbezier(95.6180, 22.4819)(95.6180, 22.4819)(105, 27)

\qbezier(105, 27)(105, 27)(114.3820, 22.4819)

\end{picture}
\end{center}
\caption{\small\it Graphs  $H_k$ with treewidth $k$ and $\frac{f(H_k)}{|V(H_k)|}=\frac{2}{k+3}$ for $k=1,2,3,4$.}
\label{FIG: Hk}
\end{figure}
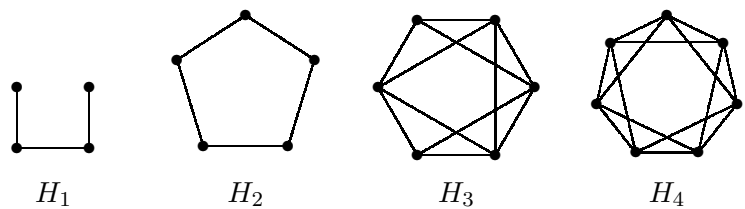


\end{document}